\setlist[enumerate,itemize]{itemsep=3pt}
\newtheorem{thm}{Theorem}[section]
\newtheorem{conj}[thm]{Conjecture}
\newtheorem{cor}[thm]{Corollary}
\newtheorem{prop}[thm]{Proposition}
\theoremstyle{remark}
\newtheorem{remark}{Remark}[section]
\DeclareMathOperator{\des}{\mathrm{des}}
\DeclareMathOperator{\asc}{\mathrm{asc}}
\DeclareMathOperator{\cdes}{\mathrm{cdes}}
\DeclareMathOperator{\casc}{\mathrm{casc}}
\DeclareMathOperator{\std}{\mathrm{std}}
\DeclareMathOperator{\pk}{\mathrm{pk}}
\DeclareMathOperator{\st}{\mathrm{st}}
\DeclareMathOperator{\depth}{\mathrm{dp}}
\DeclareMathOperator{\D}{\mathrm{D}}
\newcommand\floor[1]{\lfloor#1\rfloor}
\newcommand{\B}{\mathscr{B}}
\renewcommand{\P}{\mathscr{P}}
\renewcommand{\O}{\mathscr{O}}
\newcommand{\Eulerian}[2]{\genfrac{<}{>}{0pt}{1}{#1}{#2}}
\crefname{thm}{Theorem}{Theorems}
\numberwithin{equation}{section}
\numberwithin{figure}{section}
\numberwithin{table}{section}
\title{The peak and descent statistics over ballot permutations}
\author[D.G.L. Wang]{David G.L. Wang$^{1,2}$}
\address{
$^1$School of Mathematics and Statistics, Beijing Institute of
Technology, 102488 Beijing, P. R. China\\
$^2$Beijing Key Laboratory on MCAACI, Beijing Institute of
Technology, 102488 Beijing, P. R. China
%\\
%$^3$ **************, Beijing Institute of
%Technology, 102488 Beijing, P. R. China
}
\email{glw@bit.edu.cn}
\author[Tongyuan Zhao]{Tongyuan Zhao$^3$}
\address{
$^3$College of Sciences,
China University of Petroleum,
102249 Beijing, P. R. China}
\email{zhaotongyuan@cup.edu.cn}
\thanks{Wang is supported by the General Program of National Natural
Science Foundation of China (Grant No.\ 11671037). Zhao is Supported
by Science Foundation of China University of Petroleum, Beijing
(Grant No.\ 2462020YXZZ004).}
\keywords{ballot permutations, Eulerian numbers}
\subjclass[2020]{05A05 05A15 05A19 11B37}
\begin{document}
\maketitle

\begin{abstract}
A ballot permutation is a permutation $\pi$ such that in any prefix of $\pi$ the descent number is not more than the ascent number. By using a reversal-concatenation map, we give a formula for the joint distribution (pk, des) of the peak and descent statistics over ballot permutations, and connect this distribution and the joint distribution (pk, dp, des) of the peak, depth, and descent statistics over ordinary permutations in terms of generating functions. As corollaries, we obtain several formulas for the bivariate generating function for (i) the peak statistic over ballot permutations, (ii) the descent statistic over ballot permutations,
and (iii) the depth statistic over ordinary permutations. In particular, we confirm Spiro’s conjecture which finds the equidistribution of the descent statistic for ballot permutations and an analogue of the descent statistic for odd order permutations.
\bigskip

\end{abstract}

%%% ----------------------------------------------------------------------

\section{Introduction}

In 1887 Bertrand~\cite{Ber87} introduced the \emph{ballot problem}: Consider an election for two candidates $A$ and $B$ with a total of $n$ votes, where $A$ wins $a$ votes and $B$ wins $n-a=b$ votes. What is the probability that at each count $A$ is always ahead? Equivalently,
what is the probability of a lattice path from the origin to the point $(b,\,a-1)$
is a ballot path?
Here a \emph{ballot path}
is a lattice path that never goes below the line $y=x$, see \cite{CFJ11,MSTT16,NS10,Whi78,Ber87}.
This is one of the beginnings of lattice path enumeration and early problems in probabilistic combinatorics, see Humphreys~\cite{Hum10}.
Among various ways of solving the ballot problem,
there are the well known reflection principle~\cite{Hum10}
and the cycle lemma~\cite{DM47}.
The answer to the ballot problem is the \emph{ballot number}
\[
\frac{a-b}{a+b}{a+b\choose b},
\]
which reduces to the Catalan number $\frac{1}{n+1}{2n\choose n}$
when $a=n+1$ and $b=n$, see~\cite{Aig08,Ges92}.
The ballot problem was generalized by Barbier~\cite{Bar87}
which demands $A$ maintains as more than $k$ times many votes as $B$,
see also Renault \cite{Ren07,Tak97}.

A \emph{ballot permutation} is a permutation
in any prefix of which the number of ascents
is at least the number of descents.
Every ballot permutation on $n$ distinct integers
naturally corresponds to a ballot path $p_{1}p_{2}\dotsm p_{n}$ such that $p_{i}p_{i+1}$ is the unit north step $(0,1)$ if $i$ is an ascent,
and the unit east step $(1,0)$ otherwise.

The problem of enumerating ballot permutations is closely related with
that of enumerating ordinary permutations with
a given up–down signature, see \cite{And81,BFW07,Niv68,She12}.
Bernardi, Duplantier and Nadeau \cite{BDN10} proved
that the number of ballot permutations of length $n$
equals the number of odd order permutations of length $n$,
by using compositions of bijections, and thus \cref{thm:BDN} follows.
A short proof is given by the first author and Zhang~\cite{WZ20}.

\begin{thm}[Bernardi, Duplantier and Nadeau]\label{thm:BDN}
The number of ballot permutations of length $n$ is
\[
b_n=\begin{cases}
[(n-1)!!]^{2},&\text{if $n$ is even},\\
n!!(n-2)!!,&\text{if $n$ is odd},
\end{cases}
\]
where $(-1)!!=1$.
\end{thm}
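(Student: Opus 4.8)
The plan is to reduce the problem to enumerating odd order permutations, for which the exponential generating function is explicit. As recalled above, Bernardi, Duplantier and Nadeau~\cite{BDN10} constructed a composition of bijections showing that $b_n$, the number of ballot permutations of length $n$, equals the number of odd order permutations of length $n$, i.e.\ permutations of $\{1,2,\dots,n\}$ all of whose disjoint cycles have odd length (a short proof is also given by Wang and Zhang~\cite{WZ20}). So it suffices to show that this latter number equals $[(n-1)!!]^2$ when $n$ is even and $n!!\,(n-2)!!$ when $n$ is odd.

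First I would invoke the exponential formula: the exponential generating function for permutations whose cycle lengths all belong to a prescribed set $S$ is $\exp\bigl(\sum_{k\in S}x^k/k\bigr)$. Taking $S$ to be the set of odd positive integers yields
\[
\sum_{n\ge0}b_n\frac{x^n}{n!}
=\exp\Bigl(\sum_{k\ge1,\ k\text{ odd}}\frac{x^k}{k}\Bigr)
=\exp\Bigl(\tfrac12\log\tfrac{1+x}{1-x}\Bigr)
=\sqrt{\frac{1+x}{1-x}}
=(1+x)(1-x^2)^{-1/2}.
\]
Next I would expand $(1-x^2)^{-1/2}=\sum_{m\ge0}\frac{(2m-1)!!}{(2m)!!}\,x^{2m}$ and extract coefficients. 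Using the identity $(2m)!=(2m)!!\,(2m-1)!!$, the even part gives $b_{2m}=(2m)!\cdot\frac{(2m-1)!!}{(2m)!!}=[(2m-1)!!]^2$, and the odd part gives $b_{2m+1}=(2m+1)!\cdot\frac{(2m-1)!!}{(2m)!!}=(2m+1)!!\,(2m-1)!!$; rewriting these in terms of $n$ recovers precisely the two cases of the asserted formula, with the convention $(-1)!!=1$ handling $n=1$.

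As a variant of the last step, one can note that $f(x)=\sqrt{(1+x)/(1-x)}$ satisfies $(1-x^2)f'=f$, which is equivalent to the recurrence $b_{n+1}=b_n+n(n-1)b_{n-1}$ for $n\ge1$ together with $b_0=b_1=1$, and then verify the closed form by a one-line induction using $n!!=n\,(n-2)!!$. The only genuine obstacle lies in the reduction step, namely the equinumerosity of ballot permutations and odd order permutations; this is exactly what \cite{BDN10} and \cite{WZ20} supply, and every step afterward is a routine generating-function computation. (If one insisted on a self-contained argument, one could instead try to establish the recurrence $b_{n+1}=b_n+n(n-1)b_{n-1}$ directly on ballot permutations by analysing the position of the entry $n+1$ under deletion, but this is noticeably more delicate than the route above.)
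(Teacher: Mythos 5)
Your proposal is correct and follows the same route the paper itself takes: the paper states \cref{thm:BDN} as a consequence of the equinumerosity of ballot permutations and odd order permutations established in \cite{BDN10} (with a short proof in \cite{WZ20}), and your exponential-formula computation of $\exp\bigl(\tfrac12\log\tfrac{1+x}{1-x}\bigr)=\sqrt{(1+x)/(1-x)}$ together with the coefficient extraction is exactly the routine step the paper leaves implicit (cf.\ its equation~\eqref{gf:b}). The only non-routine ingredient in both treatments is the cited bijection, which you correctly identify as the genuine content being outsourced.
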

The sequence $\{b_n\}_{n\ge0}$ can be found in OEIS \cite[A000246]{OEIS}, of which
the exponential generating function is
\begin{equation}\label{gf:b}
\sum_{n\geq0}b_n \frac{x^n}{n!}
=\sqrt{\frac{1+x}{1-x}}.
\end{equation}
A ballot permutation whose corresponding ballot path
ends on the line $y=x$ is said to be a \emph{Dyck permutation},
whose enumeration is the Eulerian-Catalan number, see Bidkhori and Sullivant~\cite{BS11}.

Spiro~\cite{Spi20} introduced a statistic $M(\pi)$ for odd order permutations,
and conjectured that
the number of ballot permutations of length $n$ with $d$ descents
equals the number of odd order permutations $\pi$ of length $n$
such that $M(\pi)=d$.
In this paper,
we confirm the conjecture by computing their bivariate generating functions
in terms of the Eulerian numbers, respectively.

\begin{thm}\label{thm:Spiro}
Let $n\ge 1$ and $0\le d\le \floor{(n-1)/2}$.
The number of ballot permutations of length $n$
with $d$ descents equals the number of odd order permutations $\pi$
of length $n$ with $M(\pi)=d$.
\end{thm}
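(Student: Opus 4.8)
The plan is to establish \cref{thm:Spiro} by computing, in terms of the Eulerian numbers, the bivariate generating functions of the descent statistic over ballot permutations and of the statistic $M$ over odd order permutations, and by checking that the two coincide. Write $\beta_n(s)=\sum_{\pi}s^{\des\pi}$, where $\pi$ ranges over the ballot permutations of length $n$, and $\mu_n(s)=\sum_{\pi}s^{M(\pi)}$, where $\pi$ ranges over the odd order permutations of length $n$. It suffices to prove the identity $\sum_{n\ge1}\beta_n(s)\,x^n/n!=\sum_{n\ge1}\mu_n(s)\,x^n/n!$ of exponential generating functions.

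For the ballot side I would start from the joint distribution of $(\pk,\des)$ over ballot permutations produced by the reversal-concatenation map, which expresses $\sum_{\pi\text{ ballot}}t^{\pk\pi}s^{\des\pi}$ through the joint distribution of $(\pk,\depth,\des)$ over ordinary permutations. Specializing the peak variable to $t=1$ collapses this to a formula for $\sum_n\beta_n(s)\,x^n/n!$ which, after invoking the classical exponential generating function of the Eulerian polynomials $A_n(s)=\sum_{k\ge0}\Eulerian{n}{k}s^k$, can be put into explicit Eulerian-number form.

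For the odd-order side I would use that an odd order permutation is a disjoint union of odd cycles and that $M$ decomposes as a sum of contributions, one per cycle. The exponential formula then gives $\sum_n\mu_n(s)\,x^n/n!=\exp\bigl(\sum_{k\ge0}c_{2k+1}(s)\,x^{2k+1}\bigr)$, where $c_m(s)$ records the distribution of $M$ over a single $m$-cycle, normalized so that $c_m(1)=1/m$ and the whole identity reduces to \eqref{gf:b} at $s=1$. Computing $c_{2k+1}(s)$ directly from Spiro's definition of $M$ and exponentiating the resulting series should reproduce exactly the Eulerian expression obtained for the ballot side; comparing coefficients then yields $\beta_n(s)=\mu_n(s)$ for every $n$, which is the assertion.

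The ballot side is under control thanks to the reversal-concatenation map and a standard Eulerian generating-function identity, so the principal obstacle is the odd-order side: extracting the per-cycle polynomial $c_{2k+1}(s)$ from the definition of $M$, and recognizing that its exponential collapses to the same Eulerian series. Should this per-cycle computation prove unwieldy, an alternative is to invoke Spiro's own results on the distribution of $M$ and match them term by term against the ballot-side formula, thereby reducing the conjecture to an identity between two explicit Eulerian-type generating functions.
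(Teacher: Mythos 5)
Your plan is correct in outline, and on the ballot side it coincides with the paper's: both specialize the reversal--concatenation machinery (the peak variable set to $1$) to obtain $B^{\des}(x,t)=\exp\bigl(x+2\sum_{k\ge1}\sum_{d\le k-1}E(2k,d)t^{d+1}\tfrac{x^{2k+1}}{(2k+1)!}\bigr)$, which is \cref{fomofbxt}. On the odd-order side the two routes genuinely diverge. The paper does not use the exponential formula at all: it takes Spiro's recurrence for $\abs{\O_{n+1}(d)}$ (\cref{recofpnd}), converts it into the first-order linear ODE $\partial_x O=O\cdot\bigl(1+2t\sum_{k\ge1}\sum_{d\le k-1}E(2k,d)t^d\tfrac{x^{2k}}{(2k)!}\bigr)$, and integrates. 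Your route--decompose an odd order permutation into odd cycles, use additivity of $M$ over cycles, and apply the exponential formula--is more conceptual and self-contained, but you leave its key step (the per-cycle polynomial $c_{2k+1}(s)$) as an acknowledged obstacle. That step is in fact a short classical computation, so your plan does close: writing a cycle of length $m\ge2$ starting with its largest entry shows that the number of $m$-cycles on $[m]$ with exactly $j$ cyclic descents is $E(m-1,\,j-1)$, since the first cyclic position is forced to be a descent and the last an ascent. For $m=2k+1$ the symmetry $E(2k,j-1)=E(2k,2k-j)$ then folds the two halves of $\min(\cdes,\casc)$ together:
\begin{equation*}
\sum_{c}s^{\min(\cdes(c),\,\casc(c))}
=\sum_{j=1}^{2k}E(2k,\,j-1)\,s^{\min(j,\,2k+1-j)}
=2\sum_{d=0}^{k-1}E(2k,d)\,s^{d+1},
\end{equation*}
the sum being over all $(2k+1)$-cycles on $[2k+1]$. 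Dividing by $(2k+1)!$ and adding the term $x$ for fixed points, the exponential formula gives exactly the exponent in \cref{fomofbxt}, so the two generating functions agree with no need for Spiro's recurrence. (Your normalization check $c_m(1)=1/m$ and the reduction to \eqref{gf:b} at $s=1$ via $\exp\bigl(\sum_{k\ge0}x^{2k+1}/(2k+1)\bigr)=\sqrt{(1+x)/(1-x)}$ is consistent with this.) In short: your approach is valid and arguably cleaner on the odd-order side, trading the paper's reliance on Spiro's Proposition 3.2 and an ODE for a direct cycle-level Eulerian count; your fallback of citing Spiro's results is essentially what the paper actually does.
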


The first author and Zhang~\cite{WZ20}
refined Spiro’s conjecture by tracking the neighbors of the
largest letter in these permutations, which is still open.
They defined a word $u$ as a \emph{factor} of a word $w$ if there exist words $x$ and $y$ such that
$w = xuy$, and a word $u$ as a \emph{cyclic factor} of a permutation $\pi\in\mathcal{S}_n$ if $u$ is a factor of some word $v$ such that $(v)$ is a cycle of $\pi$. The conjecture is as follows.

\begin{conj}[Wang and Zhang]\label{thm:WZ}
For all $n$, $d$, and $2\leq j \leq n-1$, we have $b_{n,d}(1,j)+b_{n,d}(j,1)= 2p_{n,d}(1,j)$,
where $b_{n,d}(i,j)$ is the number of ballot permutations of length $n$ with $d$ descents which have $inj$ as a factor, and $p_{n,d}(i,j)$ is the number of odd order permutations of length $n$ with $M(\pi)=d$ which have $inj$ as a cyclic factor.
\end{conj}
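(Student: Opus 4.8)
The plan is to establish this pointed refinement of \cref{thm:Spiro} by rerunning the two generating-function computations behind that theorem — the ballot side via the reversal-concatenation map (most naturally by refining the paper's peak--descent generating function, in which $n$ always occurs as one distinguished interior peak), the odd order side via the cycle decomposition — but now with the three letters $1$, $n$, $j$ marked, and then checking that the resulting trivariate generating functions
\[
\sum_{n\ge 1}\sum_{d\ge 0}\sum_{j=2}^{n-1}\bigl(b_{n,d}(1,j)+b_{n,d}(j,1)\bigr)z^{j}t^{d}\frac{x^{n}}{n!}
\quad\text{and}\quad
\sum_{n\ge 1}\sum_{d\ge 0}\sum_{j=2}^{n-1}2\,p_{n,d}(1,j)\,z^{j}t^{d}\frac{x^{n}}{n!}
\]
coincide. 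As a first reduction I would note that $\pi\mapsto\pi^{-1}$ preserves odd order, reverses cycles, and carries the cyclic factor $1nj$ to $jn1$, so — provided $M(\pi)=M(\pi^{-1})$, which I would verify from Spiro's definition — we get $p_{n,d}(1,j)=p_{n,d}(j,1)$; both sides then count configurations in which $\{1,j\}$ is the unordered pair of (word, resp.\ cyclic) neighbours of $n$, which removes some case analysis.

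On the odd order side, among the permutations counted by $p_{n,d}(1,j)$ the letters $1$ and $n$ lie in a common cycle $C$, and being the global minimum and maximum they make $C$ a canonical block: $\pi$ is $C$ together with an odd order permutation on the remaining letters. Imposing $\pi(1)=n$ and $\pi(n)=j$ in the \cref{thm:Spiro} computation should factor the generating function as the contribution of the single odd cycle $C$ — in which the maximum is preceded by the minimum and followed by the marked $j$ — times the contribution of the rest, which is exactly the series that appears in the proof of \cref{thm:Spiro}. What remains is to evaluate the single-odd-cycle generating polynomial for $M$ refined by the element following the maximum, which I would aim to express through the Eulerian numbers $\Eulerian{m}{k}$, as its unrefined form presumably is. On the ballot side I would feed the reversal-concatenation map a permutation carrying the marked factor $1nj$ (or $jn1$): its lattice path has a distinguished peak at $n$ with a prescribed descent into $1$ and ascent out of $j$, and one must follow this marked peak through the decomposition into prime arcs and a final strictly-above arc, again collecting an Eulerian-type series. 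Matching the two series term by term — using $\sum_{m}A_m(t)x^m/m!=(t-1)/(t-e^{(t-1)x})$ with $A_m(t)=\sum_k\Eulerian{m}{k}t^k$, as in the unrefined proof — would finish the argument.

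The main obstacle is structural: \cref{thm:Spiro} is proven through generating functions rather than a bijection, so there is no equidistribution map to refine, and marking three distinguished letters at once breaks the clean product and exponential structure that makes the unrefined computations go through; one needs genuinely new, triply pointed identities on both sides. Two concrete difficulties stand out. First, the refined odd order side demands a far more explicit grip on Spiro's $M$ restricted to a single cycle — in particular on how $M$ varies with the element following the maximum — than the unrefined argument needs. Second, the reversal-concatenation bookkeeping becomes delicate: the marked peak at $n$ may lie in a prime arc or in the final arc, and the forced steps at each diagonal return interact with the positions of $1$ and $j$, so the resulting identity carries several ``defect'' terms that must be reconciled. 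For these reasons I would first target partial results — the identity summed over $j$ (a ``$1$ is a neighbour of $n$'' refinement of \cref{thm:Spiro}), the restriction to Dyck permutations, and the extreme values of $d$ — and verify the conjecture for small $n$, before attempting the full triply pointed identity.
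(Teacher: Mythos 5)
This statement is \cref{thm:WZ}, which the paper explicitly presents as an open conjecture (``which is still open''); the paper contains no proof of it, so there is nothing to compare your argument against. The only question is whether your proposal itself constitutes a proof, and it does not: it is a research programme in which every substantive step is deferred. The one step you actually carry out --- $p_{n,d}(1,j)=p_{n,d}(j,1)$ via $\pi\mapsto\pi^{-1}$, using that inverting a permutation reverses each cycle and that $\min(\cdes,\casc)$ is invariant under reversing a cycle --- is correct, but it is presumably already the reason the conjecture's right-hand side is written as $2p_{n,d}(1,j)$ rather than $p_{n,d}(1,j)+p_{n,d}(j,1)$; it does not advance the main claim.

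The two computations that would constitute the proof are both left undone. On the odd order side, the ``single-odd-cycle generating polynomial for $M$ refined by the element following the maximum'' is exactly the hard object, and you neither evaluate it nor show it has an Eulerian-number expression; note also that the letters of the cycle $C$ containing $1$ and $n$ are not determined, so one must sum over subsets and track how $M$ of the standardized cycle depends on which letter plays the role of $j$. On the ballot side, your plan to follow the marked peak ``through the decomposition into prime arcs'' does not match the paper's reversal-concatenation map, which performs a single cut at a lowest position; the factor $1nj$ (or $jn1$) can straddle that cut, and the resulting case analysis --- which you yourself flag as producing unreconciled ``defect'' terms --- is precisely where a proof would have to do its work. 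Since you conclude by proposing to verify only partial cases and small $n$, the proposal should be read as a plan of attack on an open problem, not as a proof.
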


%The sequence $\{b_n(d)\}_d$ represents the distribution of des over ballot permutations. There have been numerous studies on the distributions of statistics on $\mathcal S_{n}$, see \cite{Fu18,GZ18,PZ19,Zhuang17} for instance.
%For Spiro's conjecture, we do not find these methods work;
%our prescription is to study the joint distribution of the statistics (pk, des) over ballot permutations in terms of generating functions.

Manes, Sapounakis, Tasoulas and Tsikouras~\cite{MSTT11} introduced
the concept of depth for a lattice path defined to be the difference
between the height of the lowest position of the path
and that of the starting point.
In this paper,
we also consider the depth statistic of a ballot permutation $\pi$ defined to be
the depth of the ballot path corresponding to~$\pi$.
Under this notion, a permutation is a ballot one if and only if its depth is zero.
%We then involve the depth statistic into the previous generating function over ordinary permutations.
%and that for depth over ordinary permutations (\cref{gf:dep}).

Zhuang~\cite{Zhuang17} studied the generating function $P^{(\pk,\,\des)}$ of
the peak number and descent number over ordinary permutations
using noncommutative symmetric functions.
%which were introduced by Gelfand, et al. \cite{GKLLRT95}.}
By using a map which we called
\emph{reversal-concatenation},
and an operator tool,
we find a relation between $P^{(\pk,\,\des)}$ and the generating function $B^{(\pk,\,\des)}$ of the same statistics over ballot permutations, see \cref{RelPkDes}.
The reversal-concatenation enables us to deal with the relations between the joint distribution (pk, dp, des) over ordinary permutations
and the joint distribution (pk, des) over ballot permutations in a uniform manner
which derive several corollaries, see \cref{sec:stat:ballot}.

The discovery of the map was inspired by Gessel's combinatorial interpretation of a decomposition of formal Laurent series in terms of lattice paths \cite{Ges80},
Bernardi et~al.'s path decompositions~\cite{BDN10},
and the $\omega$-decomposition~\cite{WZ20}.
Earlier decompositions were described by Feller~\cite[page~383]{Fel66}
and Foata and Sch\"utzenberger~\cite{FS71}.
The map is also related to the lowest points of the paths,
which was used in the studies of the Chung-Feller theorem,
see Woan~\cite{Woa01}, Shapiro~\cite{Sha01} and Eu, Fu and Yeh~\cite{EFY05}.
Basic generating functions calculating is also used throughout the paper, see Wilf~\cite{Wilf06B} and Flajolet and Sedgewick~\cite{FS09B}
for general techniques of generating functions.
%But this method seems doesn't work for other statistics such as inv and maj.

The main results of this paper, besides~\cref{thm:Spiro},
are \cref{formdespk,recbdespkmh}.
In \cref{formdespk} we express the generating function $B^{(\pk,\,\des)}(x,y,t)$
as the image of a rational function under an operator.
In \cref{recbdespkmh} we provide a relation between the generating functions
$B^{(\pk,\,\des)}(x,y,t)$ and $P^{(\pk,\,\depth,\,\des)}(x,y,z,t)$.
As corollaries, we obtain the bivariate generating functions for peak number
over ballot permutations, and that for descent number
over ballot permutations, see~\cref{gf:bpk,fomofbxt}.

The next section consists of necessary notion and notation.
In \cref{sec:stat:ballot} we demonstrate the reversal-concatenation map
and its consequences.
In \cref{sec:proof:SpiroConj}, we use \cref{fomofbxt} to establish \cref{thm:Spiro}.

\section{Preliminary}\label{sec:preliminary}
Let $\mathcal S_{n}$ be the permutation group on the set $[n]=\{1, 2, \dots , n\}$.
A position $1\le i\le n-1$ in a permutation $\pi=\pi_1\pi_2\dotsm\pi_n\in\mathcal S_n$ is a \emph{descent} if $\pi_i>\pi_{i+1}$,
and an \emph{ascent} if $\pi_i<\pi_{i+1}$.
Denote the number of descents of $\pi$ by $\des(\pi)$,
and the number of ascents by $\asc(\pi)$.
We call the number
\[
h(\pi)
=\asc(\pi)-\des(\pi)
\]
the \emph{height} of $\pi$.
The permutation $\pi$ is said to be a \emph{ballot permutation}
if the height of any prefix of~$\pi$ is nonnegative, namely,
$h(\pi_1\pi_2\cdots\pi_i)\ge 0$
for all $i\in[n]$.
Let $\B_n$ denote the set of ballot permutations on $[n]$. Define $\B_{0}=\{\epsilon\}$,
where $\epsilon$ is the empty permutation.
The number of ballot permutations of height $0$, which are also called the \emph{Dyck permutations}, is the Eulerian-Catalan number.
We define
\[
\depth(\pi)
:= -\textrm{min}\{h(\pi_{1}\pi_{2}\dotsm\pi_{i})\colon 1\leq i\leq n\},
\]
and $\depth(\epsilon)=0$.
It is clear that
\[
0\leq \depth(\pi)\leq \des(\pi).
\]
A \emph{lowest position} of $\pi$ is a position $1\le i\le n$ such that $h(\pi_1\pi_2\dotsm \pi_i)=-\depth(\pi)$. Denote by $L(\pi)$ the set of
lowest positions of $\pi$. For example,
\[
\depth(5641327)=1
\quad\text{and}\quad
L(5641327)=\{4,6\}.
\]
From the definition, we see that
\[
\pi\in\B_n
\iff
1\in L(\pi).
\]

Let $\O_n$ be the set of \emph{odd order permutations}
of $[n]$, viz., the set of permutations of $[n]$ which are the products of cycles with odd lengths. In order to define an analogue for the descent statistic in the context
of odd order permutations, Spiro~\cite{Spi20} defines
for a permutation $\pi$ that
\[
M(\pi)=\sum_{c}\min\brk1{\cdes(c),\,\casc(c)},
\]
where the sum runs over all cycles $c=(c_1c_2\dotsm c_k)$ of $\pi$, with the \emph{cyclic descent}
\[
\cdes(c)=\abs{\{i\in[k]\colon c_i>c_{i+1}\ \text{where $c_{k+1}=c_1$}\}},
\]
and the \emph{cyclic ascent}
\[
\casc(c)=\abs{\{i\in[k]\colon c_i<c_{i+1}\ \text{where $c_{k+1}=c_1$}\}}
=\abs{c}-\cdes(c),
\]
where $\abs{c}$ is the length of $c$.

For $n\ge1$ and $0\le d\le n-1$,
the \emph{Eulerian number}, denoted as $E(n,d)$ or $\Eulerian{n}{d}$,
is the number of
permutations of $[n]$ with $d$ descents, namely
\[
E(n,d)=|\{\pi\in\mathcal S_n\colon \des(\pi)=d\}|,
\]
see OEIS~\cite[A008292]{OEIS}.
%One may show formulas for the numbers $E(n,d)$ when $d$ is small, like
%\[
%E(n,1)=2^n-n-1
%\quad\text{and}\quad
%E(n,2)=3^n-(n+1)2^n+\binom{n+1}{2}.
%\]
We adopt the convention $E(0,0)=1$ and
\[
E(n,d)=0,\qquad\text{if $n<0$, or $d<0$, or $d=n\ge 1$, or $d>n$}.
\]
As will be seen,
this extension helps dealing with summation calculation
by simplifying the domain of indices in summations,
so that one may focus on the summands.
For instance,
the notation $\sum_i$ implies that the index $i$ runs over all integers.

The $n$th \emph{Eulerian polynomial} is
\[
A_n(t)=\sum_{\pi\in\mathcal S_n}t^{\des(\pi)}
=\sum_{d}E(n,d)t^{d}\quad\text{for $n\ge 1$},
\]
and $A_0(t)=1$,
see Kyle Petersen~\cite[\S 1.4]{Kyle15B}.
The exponential generating function of the Eulerian polynomials is
\begin{equation}\label{gf:A}
\sum_{n}A_{n}(t)\frac{x^{n}}{n!}
=\frac{t-1}{t-e^{(t-1)x}},
\end{equation}
see \cite[Theorem 1.6]{Kyle15B} and
%Mendes and Remmel~\cite[Corollary 3.2]{MR15B},
\cite[Formula (75)]{FS09B}. (It is often useful to consider the variant $E_n(t)$ of Eulerian polynomials defined by $E_n(t)=tA_n(t)$, which are also called the \emph{Eulerian polynomials} in some literatures, see Stanley~\cite[\S 1.3]{Sta97B} and B\'ona~\cite[Theorem 1.22]{Bona12B}.)
%For definiteness, we adhere to that $E_n(t)$ are the Eulerian polynomials.
%By \cref{gf:A}, it is direct to compute that
%\[\sum_{n}E_{n}(t)\frac{x^{n}}{n!}%=1+t\brk3{\frac{t-1}{t-e^{(t-1)x}}-1}=\frac{1-t}{1-t e^{(1-t)x}},\]
%see B\'ona~\cite[Theorem 1.22]{Bona12B}.

The bivariate generating function
\begin{align*}
E(x,t)
&=\sum_{n\ge1}\sum_{d}\frac{E(n,d)t^{d}x^{n}}{n!}
=\sum_{n\ge1}A_n(t)\frac{x^{n}}{n!}\\
&=x+\frac{x^2}{2}(1+t)+\frac{x^3}{3!}\brk1{1+4t+t^2}
+\frac{x^4}{4!}\brk1{1+11t+11t^2+t^3}+\dotsm
\end{align*}
has the closed form
\begin{equation}\label{Eulequ}
E(x,t)
=\frac{t-1}{t-e^{(t-1)x}}-1
=\frac{e^{(1-t)x}-1}{1-t e^{(1-t)x}}.
\end{equation}

For $m\geq 1$, nonnegative integers $n_1,n_2,\dots,n_m$, and
statistics
$\st_{1},\,\st_{2},\,\dots,\,\st_{m}$ over~$\mathcal S_{n}$,
let
\begin{align*}
\P_n^{(\st_{1},\,\st_{2},\,\dots,\,\st_{m})}(n_1,n_2,\dots,n_m)
&=\{\pi\in\mathcal S_{n}\colon\st_{i}(\pi)= n_{i}\ \text{for all $1\leq i\leq m$}\}
\quad\text{and}\quad\\
p_n^{(\st_{1},\,\st_{2},\,\dots,\,\st_{m})}(n_1,n_2,\dots,n_m)
&=\abs{\P_n^{(\st_{1},\,\st_{2},\,\dots,\,\st_{m})}(n_1,n_2,\dots,n_m)}.
\end{align*}
For convenience, we define
$\P_n^{(\st_{1},\,\st_{2},\,\dots,\,\st_{m})}(n_1,n_2,\dots,n_m)=\emptyset$
if any one of $n,n_1,\dots,n_m$ is negative, and
\[
\P_0^{(\st_{1},\,\st_{2},\,\dots,\,\st_{m})}(n_1,n_2,\dots,n_m)=\begin{cases}
\{\epsilon\},&\text{if $n_1=n_2=\dotsb=n_m=0$,}\\
\emptyset,&\text{otherwise}.
\end{cases}
\]
%The ordinary generating function of the joined distribution
%of the statistics $\st_{1}$, $\st_{2}$, $\dots$, $\st_{m}$ is the polynomial
%\[
%P^{(\st_{1},\,\st_{2},\,\dots,\,\st_{m})}_{n}(x_{1},x_{2},\dots,x_{m})
%:= \sum_{\pi\in \mathcal S_{n}}x_{1}^{\st_{1}(\pi)}x_{2}^{\st_{2}(\pi)}
%\cdots x_{m}^{\st_{m}(\pi)}.
%\]
We study the joint distribution of the statistics $\st_{1}$, $\st_{2}$, $\dots$, $\st_{m}$ by the generating function
\begin{align*}
&P^{(\st_{1},\,\st_{2},\,\dots,\,\st_{m})}(x,x_{1},\dots,x_{m})\\
=\,&\sum_{n,n_1,\dots,n_m}p_n^{(\st_{1},\,\st_{2},\,\dots,\,\st_{m})}
(n_1,n_2,\dots,n_m)x_{1}^{n_{1}}x_{2}^{n_{2}}\dotsm x_{m}^{n_{m}}\frac{x^{n}}{n!}\\
=\,&1+\sum_{n\geq 1}p^{(\st_{1},\,\st_{2},\,\dots,\,\st_{m})}_{n}
(x_{1},x_{2},\dots,x_{m})\frac{x^{n}}{n!}.
\end{align*}
Replacing the set $\mathcal S_n$ by $\B_n$ in the above definitions,
we obtain analogous definitions of the set
$\B_n^{(\st_{1},\,\dots,\,\st_{m})}(n_1,\dots,n_m)$,
the number $b_n^{(\st_{1},\,\dots,\,\st_{m})}(n_1,\dots,n_m)$,
and the generating function
$B^{(\st_{1},\,\dots,\,\st_{m})}(x,x_{1},\dots,x_{m})$.
For convenience,
we denote $(\st_1)$ by $\st_1$ without the parentheses.

%Denote by $\pi^r$ the reversal of a permutation $\pi$,
%where the alphabet of $\pi$ might not be the set $[n]$.
%The \emph{standardization} of $\pi=\pi_1\pi_2\dotsm\pi_n$
%on distinct letters, denoted $\std(\pi)$,
%is the permutation $\sigma_{1}\sigma_2\dotsm\sigma_{n}\in \mathcal S_{n}$
%such that $\sigma_{i}<\sigma_{j}$ if and only if $\pi_{i}<\pi_{j}$.
For a permutation $\pi=\pi_1\pi_2\dotsm\pi_n$ on distinct integers,
denote by $\pi^r$ the reversal of $\pi$, namely,
$\pi^r=\pi_n\pi_{n-1}\dotsm\pi_1$.
The \emph{standardization} of $\pi$, denoted $\std(\pi)$,
is the permutation $\sigma_{1}\sigma_2\dotsm\sigma_{n}\in \mathcal S_{n}$
such that $\sigma_{i}<\sigma_{j}$ if and only if $\pi_{i}<\pi_{j}$.
For convenience, we define $\std(\epsilon)=\epsilon$.

\section{Statistics over ballot permutations}\label{sec:stat:ballot}

In this section, we use a map, which we call the \emph{reversal-concatenation},
to establish a series of relations between joint distributions of statistics
over $\mathcal S_n$ and $\B_n$.
Let
\[
\mathcal P=\{(\rho,\tau)\colon  \text{$\exists\ 0\leq l \leq n$ such that
$\std(\rho)\in \B_{l}$,
$\std(\tau)\in \B_{n-l}$,
and $\rho\tau\in \mathcal S_{n}$}\}.
\]
The reversal-concatenation map $\phi$ is defined as
\begin{align}
\phi\colon\mathcal P&\to\bigcup_{n\geq1}\mathcal S_{n}\notag\\
(\rho,\tau)&\mapsto\rho^{r}\tau.\label{def:varphi}
\end{align}
Suppose that
$\rho=\rho_1\rho_2\cdots\rho_{l}$ and
$\tau=\tau_1\tau_2\cdots\tau_{n-l}$.
Let $\pi=\phi(\rho,\tau)$.
It is easy to check that
\begin{equation}\label{des:pi:rho1<tau1}
\des(\pi)
=l-1-\des(\rho)+\des(\tau)+\chi(\text{$\rho=\epsilon$ or $\rho_{1}>\tau_1$}),
\end{equation}
where $\chi$ is the characteristic function.
It is also easy to see that when $(\rho,\tau)\neq(\epsilon,\epsilon)$,
\begin{itemize}
\item
if $\rho=\epsilon$ or $\rho_1>\tau_1$,
then the position $l+1$ is the first lowest position of $\pi$;
\item
if $\tau=\epsilon$ or $\rho_1<\tau_1$,
then the position $l$ is the last lowest position of $\pi$.
\end{itemize}
For example,
the first lowest position of the permutation $\phi(341,265)=143265$ is 4;
the last lowest position of $\phi(134,256)=431256$ is 3.

A position $2 \leq i \leq n-1$ in a permutation $\pi=\pi_{1}\pi_{2}\dotsm \pi_{n}\in\mathcal S_n$ is a \emph{peak} if $\pi_{i-1}<\pi_{i}$ and $\pi_i>\pi_{i+1}$.
Let $\pk(\pi)$ be the number of peaks in $\pi$.

\subsection{Connecting the joint distribution (pk, dp, des) over $\mathcal S_n$
and the joint distribution (pk, des) over $\B_n$}
The goal of this subsection is to establish \cref{formdespk,recbdespkmh}.
%\DWr{I think the goal should
%be the generating functions,not the relation between numbers? the generating functions seems to contain more information.}
%a relation between the numbers
%\[
%p_n^{(\pk,\,\depth,\,\des)}(k,h,d)
%\quad\text{and}\quad
%b_n^{(\pk,\,\des)}(i,j).
%\]
For clarification, we show the following specification of \cref{recbdespkmh} first.

\begin{thm} \label{recbdespk}
For $n \geq 0,k\geq 0$ and $d\geq 0$ such that $(n,k,d) \neq (0,0,1)$,
we have
\begin{equation}\label{e17}
p_n^{(\pk,\,\des)}(k,d)+p_n^{(\pk,\,\des)}(k,\,d-1)
=\sum_{l,\,i,\,j}\binom{n}{l}\,b_l^{(\pk,\,\des)}(i,j)
b_{n-l}^{(\pk,\,\des)}(k-i,\,d-l+j).
\end{equation}
\end{thm}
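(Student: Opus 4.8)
The plan is to read off both sides of~\eqref{e17} as cardinalities of sets of permutations and to match them through the reversal-concatenation map~$\phi$.

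First I would record how $\phi$ transforms the two statistics. Since the standardization of a ballot word of length at least $2$ begins with an ascent, for $(\rho,\tau)\in\mathcal P$ and $\pi=\phi(\rho,\tau)=\rho^r\tau$ with $l=\abs{\rho}$, neither position $l$ nor position $l+1$ of $\pi$ can be a peak (a peak at $l$ would force $\rho_2<\rho_1$, a peak at $l+1$ would force $\tau_1>\tau_2$); as reversal preserves peak counts, the peaks of $\pi$ are exactly those of $\rho^r$ together with those of $\tau$, so $\pk(\pi)=\pk(\rho)+\pk(\tau)$ (the boundary cases $l\in\{0,n\}$ being trivial). Writing $D(\rho,\tau):=\abs{\rho}+\des(\tau)-\des(\rho)$, identity~\eqref{des:pi:rho1<tau1} then gives
\[
\des(\pi)=
\begin{cases}
D(\rho,\tau), & \text{if $\rho=\epsilon$ or $\rho_1>\tau_1$ (we call such a pair \emph{type I}),}\\
D(\rho,\tau)-1, & \text{if $\tau=\epsilon$ or $\rho_1<\tau_1$ (\emph{type II}).}
\end{cases}
\]
When $n\ge1$, every $(\rho,\tau)\in\mathcal P$ with $\abs{\rho}+\abs{\tau}=n$ is of exactly one type.

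Next I would expand the right-hand side of~\eqref{e17}. Because $\pk$ and $\des$ are invariant under standardization, the summand $\binom{n}{l}\,b_l^{(\pk,\des)}(i,j)\,b_{n-l}^{(\pk,\des)}(k-i,d-l+j)$ counts those pairs $(\rho,\tau)\in\mathcal P$ with $\abs{\rho}=l$, $\pk(\rho)=i$, $\des(\rho)=j$, $\pk(\tau)=k-i$ and $\des(\tau)=d-l+j$ --- the binomial chooses the $l$-element value set of $\rho$, after which the two standardizations are chosen independently. Summing over $l,i,j$, the right-hand side is the number of $(\rho,\tau)\in\mathcal P$ with $\abs{\rho}+\abs{\tau}=n$, $\pk(\rho)+\pk(\tau)=k$ and $D(\rho,\tau)=d$. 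Splitting this set by type and using the dichotomy above, the right-hand side becomes
\[
\#\bigl\{(\rho,\tau)\ \text{of type I}\colon \pk(\phi(\rho,\tau))=k,\ \des(\phi(\rho,\tau))=d\bigr\}
+\#\bigl\{(\rho,\tau)\ \text{of type II}\colon \pk(\phi(\rho,\tau))=k,\ \des(\phi(\rho,\tau))=d-1\bigr\}.
\]

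It remains to show that $\phi$ restricts to a bijection from the type-I pairs onto $\mathcal S_n$ and to a bijection from the type-II pairs onto $\mathcal S_n$; granting this, the last display equals $p_n^{(\pk,\des)}(k,d)+p_n^{(\pk,\des)}(k,d-1)$, the left-hand side. The inverse on the type-I side sends $\pi\in\mathcal S_n$ to $\bigl((\pi_1\cdots\pi_{L-1})^r,\ \pi_L\cdots\pi_n\bigr)$, where $L$ is the first lowest position of $\pi$, and on the type-II side it sends $\pi$ to $\bigl((\pi_1\cdots\pi_{L'})^r,\ \pi_{L'+1}\cdots\pi_n\bigr)$, where $L'$ is the last lowest position. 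That these maps and $\phi$ are mutually inverse and preserve type follows routinely from the first/last lowest position statements recorded after~\eqref{des:pi:rho1<tau1}. The one step I expect to require real work is checking that the two output words are standardizations of ballot words: the $\tau$-part on the type-I side and the $\rho$-part on the type-II side are ballot merely because $h(\pi_1\cdots\pi_i)\ge-\depth(\pi)$ for all $i$, but the $\rho$-part on the type-I side needs $h(\pi_1\cdots\pi_{L-1})$ to be the minimum of $h(\pi_1\cdots\pi_i)$ over $i\le L-1$, which holds because $L$ being the \emph{first} lowest position forces $\pi_{L-1}>\pi_L$ and $h(\pi_1\cdots\pi_{L-1})=-\depth(\pi)+1$; dually for the $\tau$-part on the type-II side, using that $L'$ is the \emph{last} lowest position. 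Finally, the degenerate case $n=0$ --- where the pair $(\epsilon,\epsilon)$ is of both types, which is exactly why $(n,k,d)=(0,0,1)$ must be excluded --- is checked by a direct computation.
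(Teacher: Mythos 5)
Your proposal is correct and follows essentially the same route as the paper: the reversal-concatenation map $\phi$, the dichotomy between pairs split at the first versus the last lowest position, and the interpretation of the right-hand side as counting pairs $(\rho,\tau)$ via the binomial choice of value sets. The only organizational difference is that you present $\phi$ as two separate bijections (type I and type II) onto $\mathcal S_n$ rather than a single bijection onto the union $\P_n^{(\pk,\,\des)}(k,d)\cup\P_n^{(\pk,\,\des)}(k,\,d-1)$, and you spell out the verification that the inverse images are ballot (which the paper leaves implicit in its surjectivity step).
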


\begin{proof}
It is direct to check \cref{e17} for $n\leq d$ and for $d=0$.
Let $n>d\ge 1$ and
\begin{multline*}
R_n(k,d)
=\{(\rho,\tau)\colon\text{$\exists~0\leq l \leq n$ and $i,j\geq0$ such that}\\
\std(\rho)\in \B_{l}^{(\pk,\,\des)}(i,j),\
\std(\tau)\in \B_{n-l}^{(\pk,\,\des)}(k-i,\,d-l+j),\
\text{and }\rho\tau\in \mathcal S_{n}\}.
\end{multline*}
We shall show that both sides of \cref{e17} equal $\abs{R_n(k,d)}$.

For the left side, we shall show that
$\varphi=\phi|_{R_n(k,d)}\colon R_n(k,d)\to U_{n}(k,d)$
is a bijection, where
\[
U_{n}(k,d)=\P_n^{(\pk,\,\des)}(k,d)\cup \P_n^{(\pk,\,\des)}(k,\,d-1).
\]
First, we verify that $\varphi(R_n(k,d))\subseteq U_{n}(k,d)$.
Let $(\rho,\tau)\in R_n(k,d)$. Suppose that
\[
\rho=\rho_1\rho_2\dotsm\rho_l
\quad\text{and}\quad
\tau=\tau_{l+1}\tau_{l+2}\dotsm\tau_{n},
\]
\[
\std(\rho)\in \B_{l}^{(\pk,\,\des)}(i,j)
\quad\text{and}\quad
\std(\tau)\in \B_{n-l}^{(\pk,\,\des)}(k-i,\,d-l+j).
\]
Let $\pi=\varphi(\rho,\tau)$, i.e., $\pi=\rho^{r}\tau$.
\begin{itemize}
\item
If $\rho=\epsilon$ or $\rho_1>\tau_1$, then the first
lowest position of $\pi$ is $l+1$. By \cref{des:pi:rho1<tau1},
\[
\des(\pi)
=(l-1-j)+(d-l+j)+1=d.
\]
Since every peak in $\pi$ is either in $\rho^{r}$ or in $\tau$, we find
\[
\pk(\pi)
=\pk(\rho^{r})+\pk(\tau)
=\pk(\rho)+\pk(\tau)
=i+(k-i)=k.
\]
Thus $\pi\in\P_n^{(\pk,\,\des)}(k,d)$.
\item
If $\tau=\epsilon$ or $\rho_1<\tau_1$,
then the last lowest position of $\pi$ is $l$. Similarly,
\[
\des(\pi)=(l-1-j)+(d-l+j)=d-1
\quad\text{and}\quad
\pk(\rho^{r}\tau)=\pk(\rho)+\pk(\tau)=k.
\]
Thus $\pi\in\P_n^{(\pk,\,\des)}(k,\,d-1)$.
This completes the verification.
\end{itemize}

Second, we show that $\varphi$ is injective.
Suppose that $(\rho',\tau')\in R_n(k,d)$ such that
$\varphi(\rho',\tau')=\pi$, i.e., $(\rho')^{r}\tau'=\rho^{r}\tau$.
For $\pi\in U_{n}(k,d)$, we define
\[
l(\pi)=\begin{cases}
\max L(\pi),&\text{if $\des(\pi)=d-1$},\\
\min L(\pi)-1,&\text{if $\des(\pi)=d$}.
\end{cases}
\]
By the definition of $\varphi$,
both the permutations $\rho$ and $\rho'$ have length $l(\pi)$.
It follows immediately that $\rho'=\rho$ and $\tau'=\tau$.
This proves the injectiveness of $\varphi$.

Thirdly, we show that $\varphi$ is surjective.
For any $\pi=\pi_1\pi_2\dotsm \pi_n\in U_{n}(k,d)$,
let
\[
\rho=\pi_{l(\pi)}\pi_{l(\pi)-1}\cdots\pi_{1}
\quad\text{and}\quad
\tau=\pi_{l(\pi)+1}\pi_{l(\pi)+2}\cdots\pi_{n}.
\]
It is clear that $\varphi(\rho,\tau)=\pi$.
Hence $\varphi$ is surjective and thus bijective.
Therefore,
\[
\abs{R_n(k,d)}=\abs{U_{n}(k,d)}
=p_n^{(\pk,\,\des)}(k,d)+p_n^{(\pk,\,\des)}(k,\,d-1).
\]

Now we show the right side of \cref{e17} also equals $\abs{R_n(k,d)}$.
In fact,
since for any pair $(\rho',\tau')\in\B_l^{(\pk,\,\des)}(i,j)\times\B_{n-l}^{(\pk,\,\des)}(k-i,\,d-l+j)$,
there are ${n\choose l}$ pairs $(\rho,\tau)$ such that
$\std(\rho)=\rho'$ and $\std(\tau)=\tau'$, we obtain
\[
\abs{R_n(k,d)}
=\bigcup_{l,\,i,\,j}
\binom{n}{l}\,\abs1{\B_l^{(\pk,\,\des)}(i,j)\times \B_{n-l}^{(\pk,\,\des)}(k-i,\,d-l+j)},
\]
which is simplified to the right side of \cref{e17}.
This completes the proof.
\end{proof}

\begin{remark} \label{rem1}
From the proof of \cref{recbdespk},
we see that similar statements hold
if the statistic pk is replaced by a statistic st such that
\begin{enumerate}
\item
$\textrm{st}(\epsilon)=0$;
\item
$\st(\pi)=\st(\pi^{r})$ for any $\pi$;
\item
$\st(\pi)=\st(\pi_{1}\pi_{2}\dotsm\pi_{i-1})+\st(\pi_{i}\pi_{i+1}\dotsm\pi_{n})$
for any $\pi=\pi_{1}\pi_{2}\dotsm\pi_{n}\in\mathcal S_{n}$,
where $i=\min L(\pi)$.
\end{enumerate}
\end{remark}

\cref{recbdespk} is translated into the language of generating functions as follows.
\begin{thm} \label{RelPkDes}
\[
B^{(\pk,\,\des)}\brk2{xt,\,y,\,\frac{1}{t}}B^{(\pk,\,\des)}(x,y,t)
=(1+t)P^{(\pk,\,\des)}(x,y,t)-t.
\]
\end{thm}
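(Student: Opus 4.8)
The plan is to regard \cref{e17} as an identity among the coefficients of $P^{(\pk,\,\des)}$ and of a Cauchy product of two copies of $B^{(\pk,\,\des)}$, and to pass to generating functions by multiplying \cref{e17} by $y^{k}t^{d}x^{n}/n!$ and summing over all integers $n,k,d$ (the summands vanishing whenever an argument is negative).

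First I would handle the left-hand side of \cref{e17}. Summing $p_n^{(\pk,\,\des)}(k,d)\,y^{k}t^{d}x^{n}/n!$ over all $n,k,d$ gives $P^{(\pk,\,\des)}(x,y,t)$ by definition, while summing $p_n^{(\pk,\,\des)}(k,d-1)\,y^{k}t^{d}x^{n}/n!$ and reindexing $d\mapsto d+1$ gives $t\,P^{(\pk,\,\des)}(x,y,t)$. Hence the sum of the left-hand sides of \cref{e17} over \emph{all} triples equals $(1+t)P^{(\pk,\,\des)}(x,y,t)$. Since \cref{e17} is asserted only for $(n,k,d)\ne(0,0,1)$, I must remove from each side the monomial contributed by that single triple: on the left the removed term is $\brk1{p_0^{(\pk,\,\des)}(0,1)+p_0^{(\pk,\,\des)}(0,0)}t=(0+1)t=t$, and on the right it is $0$, because at $(0,0,1)$ only $l=i=j=0$ survives in the sum on the right-hand side of \cref{e17}, contributing $b_0^{(\pk,\,\des)}(0,0)\,b_0^{(\pk,\,\des)}(0,1)=1\cdot0$. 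Therefore the sum of the right-hand sides of \cref{e17} over all triples equals $(1+t)P^{(\pk,\,\des)}(x,y,t)-t$, and it remains only to evaluate that sum as a product of two copies of $B^{(\pk,\,\des)}$.

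For the right-hand side I would use $\binom{n}{l}/n!=1/\brk1{l!(n-l)!}$ to write the $x$-series as a Cauchy product in $l$ and $m=n-l$, and then change variables $a=k-i$, $b=d-l+j$ in the inner sums. Since $k=a+i$ and $d=b+l-j$, the monomial $y^{k}t^{d}$ factors as $\brk1{y^{i}t^{-j}}\,t^{l}\,\brk1{y^{a}t^{b}}$, which decouples the entire sum into
\[
\biggl(\sum_{l}\frac{(xt)^{l}}{l!}\sum_{i,j}b_l^{(\pk,\,\des)}(i,j)\,y^{i}t^{-j}\biggr)
\biggl(\sum_{m}\frac{x^{m}}{m!}\sum_{a,b}b_m^{(\pk,\,\des)}(a,b)\,y^{a}t^{b}\biggr),
\]
and by the definition of $B^{(\pk,\,\des)}$ this equals $B^{(\pk,\,\des)}\brk2{xt,\,y,\,\frac1t}\,B^{(\pk,\,\des)}(x,y,t)$. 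Combining the two preceding paragraphs yields the claimed identity. I would also remark that a ballot permutation of length $l$ has at most $l/2$ descents, so $j\le l/2$ and the exponent $l-j$ of $t$ is always nonnegative; consequently $B^{(\pk,\,\des)}\brk2{xt,\,y,\,\frac1t}$, and indeed every series above, is an honest formal power series in $x,y,t$, so all the regroupings are valid term by term.

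The argument is mostly bookkeeping with the index conventions; the point that needs genuine care — and the one I expect to be the main obstacle — is the exceptional triple $(0,0,1)$. One must check carefully, using $b_0^{(\pk,\,\des)}(0,0)=1$, the vanishing of the counting functions on negative arguments, and $\P_0^{(\pk,\,\des)}(0,0)=\{\epsilon\}$ together with $\P_0^{(\pk,\,\des)}(0,1)=\emptyset$, that deleting that instance of \cref{e17} costs exactly $t$ on the left and $0$ on the right; this asymmetry is precisely what produces the stray term $-t$ in the final formula rather than a clean product equality. A secondary (but routine) check is that the two substitutions $a=k-i$ and $b=d-l+j$ are mutually consistent over the full integer ranges, so that no boundary contributions are silently lost.
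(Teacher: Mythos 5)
Your proposal is correct and follows essentially the same route as the paper: multiply \cref{e17} by $y^{k}t^{d}x^{n}/n!$, sum over the admissible triples, recognize the left side as $(1+t)P^{(\pk,\,\des)}(x,y,t)-t$ (the $-t$ coming precisely from the excluded triple $(0,0,1)$, exactly as you compute), and evaluate the right side as the Cauchy product $B^{(\pk,\,\des)}(xt,y,1/t)\,B^{(\pk,\,\des)}(x,y,t)$ via the reindexing $k\mapsto k-i$, $d\mapsto d-l+j$. Your added observation that $j\le(l-1)/2$ for a ballot permutation, so that $B^{(\pk,\,\des)}(xt,y,1/t)$ is a genuine formal power series, is a worthwhile point the paper leaves implicit.
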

\begin{proof}
Multiplying each term in \cref{e17} by $y^{k}t^{d}x^{n}/n!$ and
summing over all integers $n \geq 0$, $d \geq 0$ and $k \geq 0$
such that $(n, k, d) \neq (0, 0, 1)$, we deduce the following equations respectively:
\begin{align*}
\sum_{(n, k, d) \neq (0, 0, 1)}p_n^{(\pk,\,\des)}(k, d) y^{k}t^{d}\,\frac{x^{n}}{n!}
&=\P^{(\pk,\,\des)}(x,y,t),\\
\sum_{(n, k, d) \neq (0, 0, 1)}p_n^{(\pk,\,\des)}(k, d-1) y^{k}t^{d}\,\frac{x^{n}}{n!}
&=t\sum_{(n, k, d) \neq (0, 0, 1)}p_n^{(\pk,\,\des)}(k, d-1)y^{k}t^{d-1}\,\frac{x^{n}}{n!} \\
&=t\sum_{(n, k, d) \neq (0, 0, 0)}p_n^{(\pk,\,\des)}(k, d) y^{k}t^{d}\,\frac{x^{n}}{n!}\\
&=t\,\P^{(\pk,\,\des)}(x,y,t)-t,
\end{align*}
and
\begin{align*}
&\sum_{(n, k, d) \neq (0, 0, 1)}\sum_{l,\,i,\,j}
\binom{n}{l}b_l^{(\pk,\,\des)}(i,j)
b_{n-l}^{(\pk,\,\des)}(k-i,\, d-l+j)y^{k}t^{d}\,\frac{x^{n}}{n!}\\
=&\sum_{n,\,k,\,d,\,l,\,i,\,j}
b_l^{(\pk,\,\des)}(i,j)\frac{y^{i}}{t^{j}}\frac{(xt)^{l}}{l!}
\cdot  b_{n-l}^{(\pk,\,\des)}(k-i,\, d-l+j)y^{k-i}t^{d-l+j}\frac{x^{n-l}}{(n-l)!} \\
=&\sum_{l,\,i,\,j}b_l^{(\pk,\,\des)}(i,\, j)\frac{y^{i}}{t^{j}}\frac{x^{l}}{l!}
\sum_{n,\,k,\,d}b_n^{(\pk,\,\des)}(k,\, d)y^{k} t^d\frac{x^{n}}{n!}\\
=&B^{(\pk,\,\des)}\brk2{xt,\,y,\,\frac{1}{t}}B^{(\pk,\,\des)}(x,y,t).
\end{align*}
Combining them together, we obtain the desired equation.
\end{proof}

In order to give a formula for the generating function $B^{(\pk,\,\des)}(x,y,t)$, we introduce an operator
$D^{x_1,x_2}$ for multivariate formal power series
\[
P=P(x_1,x_2,\dots,x_k)
=\sum_{n_1,\,n_2,\,\dots,\,n_k}p(n_1,n_2,\dots,n_k)
x_{1}^{n_1}x_{2}^{n_2}\dotsm x_{k}^{n_k}
\]
by defining
\[
D^{x_1,x_2}P
=\sum_{n_1\leq(n_2-1)/2,\,n_3,\,\dots,\,n_k}
p(n_1,n_2,\dots,n_k)x_{1}^{n_1}x_{2}^{n_2}\dotsm x_{k}^{n_k}.
\]
For example, $D^{t,x}(x+3x^{2}yt+2x^{3}y^{2}t)=x+2x^{3}y^{2}t$.
From the definition, it is easy to see that
\[
(D^{x_1,x_2}P)|_{x_{i_1}=c_1,\,\dots,\,x_{i_j}=c_j}
=D^{x_1,x_2}(P|_{x_{i_1}=c_1,\,\dots,\,x_{i_j}=c_j})
\]
for any $3\leq i_1<\dots<i_j\leq k$ and constants $c_1,c_2,\dots,c_j$.

Besides, we will need the following result of Zhuang~\cite[Theorem 4.2]{Zhuang17}.
\begin{thm} \label{zhuang}
For $n\ge 1$,
\begin{equation}\label{eq:despk}
\sum_{\pi\in\mathcal S_{n}}t^{\des(\pi)+1}y^{\pk(\pi)+1}
=\brk4{\frac{1+u}{1+uv}}^{n+1}vA_{n}(v),
\end{equation}
where
\begin{equation}\label{def:uv}
\begin{cases}
\displaystyle
u=\frac{1+t^{2}-2yt-(1-t)\sqrt{(1+t)^{2}-4yt}}{2(1-y)t},\\[12pt]
\displaystyle
v=\frac{(1+t)^{2}-2yt-(1+t)\sqrt{(1+t)^{2}-4yt}}{2yt}.
\end{cases}
\end{equation}
\end{thm}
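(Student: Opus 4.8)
The plan is to pass from $\sum_{\pi\in\mathcal S_n}t^{\des(\pi)+1}y^{\pk(\pi)+1}$ to a weighted enumeration of descent compositions, turn it into an exponential generating function in $x$, and then read off the closed form by matching exponentials against the Eulerian generating function in~\eqref{gf:A}. First I would record how the two statistics see the run structure: if $\pi\in\mathcal S_n$ decomposes into maximal increasing runs of lengths $(r_1,\dots,r_k)$ -- equivalently, if its descent composition is $(r_1,\dots,r_k)$ -- then $\des(\pi)=k-1$, while a short check shows $\pk(\pi)=\#\{i<k:r_i\ge2\}$, since a peak occurs exactly at the last letter of a non-final run of length at least $2$. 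Writing $\beta_n(\alpha)$ for the number of permutations of $[n]$ with descent composition exactly $\alpha$ and $p(\alpha)$ for the number of non-final parts of $\alpha$ that are $\ge2$, the left-hand side of~\eqref{eq:despk} becomes $ty\sum_{\alpha\models n}\beta_n(\alpha)\,t^{\ell(\alpha)-1}y^{p(\alpha)}$.

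Next I would substitute the standard relation $\beta_n(\alpha)=\sum_{\gamma}(-1)^{\ell(\alpha)-\ell(\gamma)}\binom{n}{\gamma}$, summed over all coarsenings $\gamma$ of $\alpha$ (the Möbius inversion on the refinement order of the identity $\binom{n}{\alpha}=\sum_{\gamma}\beta_n(\gamma)$), and interchange the order of summation; for fixed $\gamma$ the surviving sum runs over refinements $\alpha$ of $\gamma$, each of which factors uniquely as a concatenation of one composition per part of $\gamma$, and the multinomial coefficients then turn the sum over $n$ into a product of exponential generating functions. The result I expect is
\[
\sum_{n\ge1}\Bigl(\sum_{\pi\in\mathcal S_n}t^{\des(\pi)+1}y^{\pk(\pi)+1}\Bigr)\frac{x^n}{n!}=\frac{y\,\widetilde C(x)}{1-C(x)},
\]
where $C(x)$ and $\widetilde C(x)$ are the exponential generating functions in $x$ of the polynomials $C(q)=\sum_{\delta\models q}(-1)^{\ell(\delta)-1}t^{\ell(\delta)}y^{\#\{i:\delta_i\ge2\}}$ and $\widetilde C(q)=\sum_{\delta\models q}(-1)^{\ell(\delta)-1}t^{\ell(\delta)}y^{\#\{i<\ell(\delta):\delta_i\ge2\}}$, the tilde recording that the final run of a permutation never carries a peak.

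The series $C(q)$ and $\widetilde C(q)$ are immediate from their ordinary generating functions in $z$, which are (a shift of) $\tfrac{z-1}{1+(t-1)z+t(y-1)z^2}$ and $\tfrac{tz}{1+(t-1)z+t(y-1)z^2}$; the shared denominator has discriminant exactly $(1+t)^2-4yt$, the source of the radical in~\eqref{def:uv}. A partial-fraction decomposition expresses $C(q)$ and $\widetilde C(q)$ as linear combinations of $\rho_{\pm}^{-q}$ for the two roots $\rho_\pm$, so that $C(x)$ and $\widetilde C(x)$ are linear combinations of $e^{\mu x}$ and $e^{\nu x}$ with $\mu,\nu=\bigl((1-t)\mp\sqrt{(1+t)^2-4yt}\bigr)/2$ the roots of $X^2-(1-t)X+t(y-1)$. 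The crucial simplification -- and the step I expect to be the real work -- is that $\nu-\mu$ equals the very radical $\sigma:=\sqrt{(1+t)^2-4yt}$, which forces a constant in the denominator to vanish (indeed $\sigma+\mu-\nu=0$) and collapses the two-exponential expression to a one-exponential one:
\[
\frac{y\,\widetilde C(x)}{1-C(x)}=\frac{yt\,(e^{\sigma x}-1)}{(1-\mu)-(1-\nu)e^{\sigma x}}=\frac{yt}{1-\mu}\,E\Bigl((1-\mu)x,\ \tfrac{1-\nu}{1-\mu}\Bigr),
\]
using the closed form~\eqref{Eulequ} of $E$; extracting $[x^n/n!]$ via~\eqref{gf:A} gives $\sum_{\pi\in\mathcal S_n}t^{\des(\pi)+1}y^{\pk(\pi)+1}=yt\,(1-\mu)^{n-1}A_n\bigl(\tfrac{1-\nu}{1-\mu}\bigr)$.

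It remains to identify this with~\eqref{eq:despk}, which reduces to the algebraic identities $v=\tfrac{1-\nu}{1-\mu}$, $\tfrac{1+u}{1+uv}=1-\mu$ (equivalently $u=-\mu/\nu$), and $(1-\mu)(1-\nu)=1-(\mu+\nu)+\mu\nu=1-(1-t)+t(y-1)=ty$; the last recasts the prefactor as $yt(1-\mu)^{n-1}=(1-\mu)^{n+1}v=\bigl(\tfrac{1+u}{1+uv}\bigr)^{n+1}v$, which is exactly~\eqref{eq:despk}. All three identities fall out of substituting $\mu,\nu=\bigl((1-t)\mp\sigma\bigr)/2$ and rationalising, so beyond spotting the cancellation $\sigma+\mu-\nu=0$ the only delicate point is keeping the inclusion--exclusion bookkeeping consistent with the asymmetric weight of the last run; the remainder is routine generating-function manipulation. (One could instead follow Zhuang's original argument through noncommutative symmetric functions and Stembridge's peak subalgebra.)
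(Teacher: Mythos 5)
Your proof is correct, but be aware that the paper does not prove Theorem~\ref{zhuang} at all: it is quoted verbatim as \cite[Theorem 4.2]{Zhuang17}, where Zhuang derives it with noncommutative symmetric functions and the peak/descent-set machinery. Your argument is therefore an elementary, self-contained re-derivation rather than a variant of anything in this paper. The combinatorial translation $\pk(\pi)=\#\{i<k:r_i\ge 2\}$ for a permutation with descent composition $(r_1,\dots,r_k)$ is right (including the degenerate cases of an initial or internal run of length $1$), the inclusion--exclusion over coarsenings is the standard M\"obius inversion of $\binom{n}{\alpha}=\sum_{\gamma}\beta_n(\gamma)$, and the analytic steps all check out: the ordinary generating functions of $C$ and $\widetilde C$ do have common denominator $1+(t-1)z+t(y-1)z^2=(1-\mu z)(1-\nu z)$ with $\mu+\nu=1-t$ and $\mu\nu=t(y-1)$; the constant terms in $1-C(x)$ cancel against $\sigma=\nu-\mu$, giving $1-C(x)=\sigma^{-1}\bigl((1-\mu)e^{\mu x}-(1-\nu)e^{\nu x}\bigr)$ and hence the one-exponential form you state; and the closing identities $v=(1-\nu)/(1-\mu)$, $(1+u)/(1+uv)=1-\mu$ (equivalently $u=-\mu/\nu$), and $(1-\mu)(1-\nu)=ty$ are all verified by substituting $\mu,\nu=\bigl((1-t)\mp\sigma\bigr)/2$, so that $yt(1-\mu)^{n-1}A_n(v)=\bigl(\tfrac{1+u}{1+uv}\bigr)^{n+1}vA_n(v)$ as required (I also spot-checked $n=1,2$). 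What your route buys is independence from \cite{Zhuang17}, using only the Eulerian generating function \eqref{Eulequ} already recorded in Section~2 of the paper; the one part still in outline is the factorization of the refinement sum over the parts of $\gamma$ with the asymmetric weight on the final run, which is routine but would need to be written out in full if this argument were to replace the citation.
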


Now we can give a formula for $B^{(\pk,\,\des)}(x,y,t)$.
\begin{thm} \label{formdespk}
\[
B^{(\pk,\,\des)}(x,y,t)
=\exp\brk4{\D^{t,x}\ln\brk3{1+\frac{(1+t)(1+u)v(w-1)}{yt(1+uv)(1-vw)}}}.
\]
where $u$ and $v$ are defined by \cref{def:uv}, and
\[
w=\exp\brk3{\frac{x(1+u)(1-v)}{1+uv}}.
\]
\end{thm}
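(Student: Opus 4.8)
The plan is to combine the functional equation of \cref{RelPkDes} with Zhuang's formula \eqref{eq:despk} and extract $B^{(\pk,\,\des)}$ by a logarithmic-derivative type argument. First I would rewrite \cref{RelPkDes} in the suggestive form
\[
B^{(\pk,\,\des)}\brk2{xt,\,y,\,\tfrac1t}\cdot B^{(\pk,\,\des)}(x,y,t)=(1+t)P^{(\pk,\,\des)}(x,y,t)-t,
\]
and observe that the two factors on the left differ only by the substitution $x\mapsto xt$, $t\mapsto 1/t$; writing $F(x,y,t)=\ln B^{(\pk,\,\des)}(x,y,t)$ this becomes
\[
F\brk2{xt,\,y,\,\tfrac1t}+F(x,y,t)=\ln\brk1{(1+t)P^{(\pk,\,\des)}(x,y,t)-t}.
\]
Thus the right side is a power series in $x$ that is \emph{invariant} under the operation $G(x,t)\mapsto G(xt,1/t)$ (applied twice this is the identity), and $F$ is obtained from it by a projection: for each monomial $x^{n}t^{d}$ (coefficients in $y$) appearing on the right, the pair $\{(n,d),(n,n-d)\}$ contributes, and exactly one of the two — the one with the smaller second index, i.e.\ $d\le (n-1)/2$ — belongs to $F$, while the other comes from $F(xt,y,1/t)$. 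This is precisely what the operator $\D^{t,x}$ is designed to pick out (keeping the terms with $n_{1}\le(n_{2}-1)/2$ where $n_{1}$ is the exponent of $t$ and $n_{2}$ that of $x$). Hence
\[
F(x,y,t)=\D^{t,x}\ln\brk1{(1+t)P^{(\pk,\,\des)}(x,y,t)-t},
\]
and exponentiating gives the stated shape, provided I can show the bracketed argument equals $1+\dfrac{(1+t)(1+u)v(w-1)}{yt(1+uv)(1-vw)}$.

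The second main step is therefore to compute $(1+t)P^{(\pk,\,\des)}(x,y,t)-t$ explicitly. From the definition of $P^{(\pk,\,\des)}$ and \cref{zhuang} I have
\[
t y\, P^{(\pk,\,\des)}(x,y,t)=ty+\sum_{n\ge1}\brk4{\sum_{\pi\in\mathcal S_n}t^{\des(\pi)+1}y^{\pk(\pi)+1}}\frac{x^{n}}{n!}
=ty+\sum_{n\ge1}\brk4{\frac{1+u}{1+uv}}^{n+1}vA_{n}(v)\frac{x^{n}}{n!}.
\]
Setting $c=\dfrac{1+u}{1+uv}$, the sum is $cv\sum_{n\ge1}A_{n}(v)\dfrac{(cx)^{n}}{n!}=cv\,E(cx,v)$ in the notation of \eqref{Eulequ}; adding back the $n=0$ term $A_0(v)=1$ with the prefactor $cv$ and then subtracting it, and using the closed form \eqref{gf:A}/\eqref{Eulequ}, I get a rational expression in $c$, $v$, and $e^{(1-v)cx}$. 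I would then note that $e^{(1-v)cx}=\exp\!\big(\tfrac{x(1+u)(1-v)}{1+uv}\big)=w$, so $tyP^{(\pk,\,\des)}$ becomes an explicit rational function of $t,y,u,v,w$. A short algebraic manipulation — clearing denominators, multiplying by $(1+t)/(ty)$, subtracting $t$, and simplifying the constant ($x^0$) term to confirm it is $1$ (this uses the known values of $u,v,w$ at $x=0$, namely $w=1$ and the branch choices making $u,v\to0$ appropriately as $t\to0$ or as $y\to1$) — should collapse everything to $1+\dfrac{(1+t)(1+u)v(w-1)}{yt(1+uv)(1-vw)}$. This is bookkeeping with the quadratic relations satisfied by $u$ and $v$ from \eqref{def:uv}; those relations (e.g.\ that $u$ and $v$ are essentially reciprocal roots of a common quadratic in the radical $\sqrt{(1+t)^2-4yt}$) are what make the simplification go through.

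The two steps above fit together cleanly, but each has a delicate point. The first step's subtlety is justifying the passage ``$F(xt,1/t)+F(x,t)=$ RHS'' $\Rightarrow$ ``$F=\D^{t,x}(\mathrm{RHS})$'': I must check that for every $n\ge1$ and every term $x^nt^d$ on the right, exactly one of $d,\,n-d$ satisfies the strict inequality $d\le(n-1)/2$ unless $n$ is even and $d=n/2$, in which case $d=n-d$ and the diagonal term is split evenly — but since $B^{(\pk,\,\des)}$ counts ballot permutations, which by the depth-zero characterization satisfy $\des(\pi)\le\lfloor(n-1)/2\rfloor$ (indeed $h(\pi)\ge0$ forces $\des\le\asc$, so $2\des\le n-1$ for the full word), the diagonal term $d=n/2$ never actually occurs in $F$, and the operator $\D^{t,x}$ correctly discards it from the logarithm; I should make this exclusion explicit, matching the $(n,k,d)\ne(0,0,1)$ caveat that was already handled in \cref{recbdespk}. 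I also need that taking $\ln$ and the projection $\D^{t,x}$ commute with the grading in $x$ in the naive way, which is fine since $\D^{t,x}$ acts coefficientwise in $x$ and $\ln$ of a series with constant term $1$ is a well-defined formal power series in $x$. The main obstacle, then, is the purely computational but intricate step of reducing $(1+t)P^{(\pk,\,\des)}(x,y,t)-t$ to the advertised closed form: it requires careful use of the quadratic identities for $u$ and $v$ and attention to which square-root branch is taken so that the $x^0$ coefficient comes out to exactly $1$ (so that $\ln(1+\cdots)$ and hence $\exp\D^{t,x}(\cdots)$ are legitimate and reproduce $B^{(\pk,\,\des)}$ with constant term $1$).
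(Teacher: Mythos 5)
Your proposal is correct and follows essentially the same route as the paper: apply Zhuang's formula together with \cref{gf:A} to put $(1+t)P^{(\pk,\,\des)}(x,y,t)-t$ into the form $1+\frac{(1+t)(1+u)v(w-1)}{yt(1+uv)(1-vw)}$, take logarithms in \cref{RelPkDes}, and use the bound $\des(\pi)\le(n-1)/2$ for ballot permutations to see that $\D^{t,x}$ extracts exactly $\ln B^{(\pk,\,\des)}(x,y,t)$ from the sum of the two logarithms. The only cosmetic difference is that you anticipate needing the quadratic relations for $u$ and $v$ in the final simplification, whereas once $P^{(\pk,\,\des)}=1+\frac{(1+u)v(w-1)}{yt(1+uv)(1-vw)}$ is in hand the passage to $(1+t)P^{(\pk,\,\des)}-t$ is immediate and needs no such identities.
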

\begin{proof}
By \cref{eq:despk} and \cref{gf:A}, we can deduce that
\begin{align*}\label{e18}
  P^{(\pk,\,\des)}(x,y,t)
  & =1+\sum_{n \geq 1}\sum_{\pi\in\mathcal S_{n}}y^{\pk(\pi)} t^{\des(\pi)}\frac{x^n}{n!} \\
  & =1+ \frac{1}{yt}\sum_{n\geq 1}\frac{(1+u)v}{1+uv}A_{n}(v)\frac{\brk2{\frac{x(1+u)}{1+uv}}^{n}}{n!} \\
  & =1+ \frac{(1+u)v}{yt(1+uv)}\brk4{\frac{v-1}{v-\exp\brk1{\frac{x(1+u)(v-1)}{1+uv}}}-1} \\
  &=1+\frac{(1+u)v(w-1)}{yt(1+uv)(1-vw)}.
\end{align*}
By \cref{RelPkDes}, we have
\begin{align*}
\ln\brk3{B^{(\pk,\,\des)}\brk2{xt,\,\frac{1}{t},\,y}}+
\ln\brk1{B^{(\pk,\,\des)}(x,y,t)}
&=\ln\brk1{(1+t)P^{(\pk,\,\des)}(x,y,t)-t} \\
&=\ln\brk3{1+\frac{(1+t)(1+u)v(w-1)}{yt(1+uv)(1-vw)}}.
\end{align*}
Since $\des(\pi)\leq (n-1)/2$ for $\pi\in\mathcal S_n$,
the expansion of
\[
\ln\brk1{B^{(\pk,\,\des)}(x,y,t)}
=\ln\brk3{1+\sum_{n\geq1}\sum_{\pi\in\mathcal S_n}y^{\pk(\pi)}t^{\des(\pi)}\frac{x^n}{n!}}
\]
is a multivariate formal power series,
with terms of the form $x^{n}y^{k}t^{d}$ such that $d \leq (n-1)/2$.
Similarly,
%since $n-\des(\pi)>(n-1)/2$,
the terms of the series
\[
\ln\brk3{B^{(\pk,\,\des)}\brk2{xt,\,\frac{1}{t},\,y}}
=\ln\brk3{1+
\sum_{n\geq1}\sum_{\pi\in\mathcal S_n}y^{\pk(\pi)}t^{n-\des(\pi)}\frac{x^n}{n!}}
\]
are of the form $x^{n}y^{k}t^{d}$ such that $d>(n-1)/2$.
Therefore,
\[
\ln\brk1{B^{(\pk,\,\des)}(x,y,t)}
=\brk4{\D^{t,x}\ln\brk3{1+\frac{(1+t)(1+u)v(w-1)}{yt(1+uv)(1-vw)}}},
\]
which yields the desired equation.
\end{proof}

Now we give a generalization of \cref{RelPkDes},
by considering the statistic dp over $\mathcal S_n$.
It can be shown by a proof that is similar to those of \cref{recbdespk} and \cref{RelPkDes}.

\begin{thm} \label{recbdespkmh}
For $n,d,h,k\geq 0$ such that $(n,k,h,d) \neq (0,0,1,1)$,
\begin{multline} \label{e21}
p_n^{(\pk,\,\depth,\,\des)}(k,h,d)+p_n^{(\pk,\,\depth,\,\des)}(k,\,h-1,\,d-1) \\
=\sum_{i,\,j}\binom{n}{2i+h}
b_{2i+h}^{(\pk,\,\des)}(j,\,i)
b_{n-2i-h}^{(\pk,\,\des)}(k-j,\,d-i-h).
\end{multline}
In other words,
\begin{equation}\label{RelPkDesmh}
B^{(\pk,\,\des)}\brk2{xzt,\,y,\,\frac{1}{z^{2}t}}B^{(\pk,\,\des)}(x,y,t)
=(1+zt)P^{(\pk,\,\depth,\,\des)}(x,y,z,t)-zt.
\end{equation}
\end{thm}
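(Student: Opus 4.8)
The plan is to prove the combinatorial identity~\cref{e21} by a bijection refining the one in the proof of~\cref{recbdespk}, and then to translate it into~\cref{RelPkDesmh} by the same generating-function bookkeeping that takes~\cref{recbdespk} to~\cref{RelPkDes}. The key point is that, since a permutation $\sigma$ of length $m$ has height $h(\sigma)=m-1-2\des(\sigma)$, requiring $\std(\rho)\in\B_{2i+h}^{(\pk,\,\des)}(j,i)$ is the same as requiring $\std(\rho)$ to be a ballot permutation with $j$ peaks, $i$ descents, and height $h(\rho)=h-1$. Granting this and restricting attention to $h\ge1$ (so $\rho\ne\epsilon$, since $\rho=\epsilon$ forces $i=h=0$), one combines~\cref{des:pi:rho1<tau1}, the observations on the first and last lowest positions of $\pi=\phi(\rho,\tau)=\rho^r\tau$ recorded right after it, and the reversal-invariance and locality of $\pk$ used in the proof of~\cref{recbdespk}, to compute that for any $(\rho,\tau)$ with $\rho\tau\in\mathcal S_n$, $\std(\rho)\in\B_{2i+h}^{(\pk,\,\des)}(j,i)$ and $\std(\tau)\in\B_{n-2i-h}^{(\pk,\,\des)}(k-j,\,d-i-h)$,
\[
\bigl(\pk(\pi),\,\depth(\pi),\,\des(\pi)\bigr)=
\begin{cases}
(k,\,h,\,d),&\text{if }\rho_1>\tau_1,\\
(k,\,h-1,\,d-1),&\text{if }\tau=\epsilon\text{ or }\rho_1<\tau_1.
\end{cases}
\]

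After checking~\cref{e21} directly for the boundary parameters---notably $n\le d$, $d=0$ and $h=0$, where the excluded tuple $(0,0,1,1)$ appears---assume $n>d\ge1$ and $h\ge1$. Let $R_n(k,h,d)$ be the set of pairs $(\rho,\tau)$ with $\rho\tau\in\mathcal S_n$ such that $\std(\rho)\in\B_{2i+h}^{(\pk,\,\des)}(j,i)$ and $\std(\tau)\in\B_{n-2i-h}^{(\pk,\,\des)}(k-j,\,d-i-h)$ for some $i,j\ge0$, and let $U_n(k,h,d)=\P_n^{(\pk,\,\depth,\,\des)}(k,h,d)\cup\P_n^{(\pk,\,\depth,\,\des)}(k,h-1,d-1)$, a disjoint union (the two pieces have different depths). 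The displayed computation gives $\varphi:=\phi|_{R_n(k,h,d)}\colon R_n(k,h,d)\to U_n(k,h,d)$. For injectivity one recovers the length of $\rho$ from $\pi$ as $\min L(\pi)-1$ when $\depth(\pi)=h$ and as $\max L(\pi)$ when $\depth(\pi)=h-1$, which determines $(\rho,\tau)$. For surjectivity one cuts a given $\pi\in U_n(k,h,d)$ at that position and verifies, using the additivity $h(\pi_1\dotsm\pi_b)=h(\pi_1\dotsm\pi_a)+h(\pi_a\dotsm\pi_b)$ of heights across a common entry and the definitions of $\min L$ and $\max L$, that the reversed prefix standardizes to a ballot permutation of height $h-1$ with the prescribed peak and descent numbers, and the suffix standardizes to a ballot permutation with the prescribed peak and descent numbers. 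Finally, as in~\cref{recbdespk}, each pair in $\B_{2i+h}^{(\pk,\,\des)}(j,i)\times\B_{n-2i-h}^{(\pk,\,\des)}(k-j,\,d-i-h)$ has $\binom{n}{2i+h}$ preimages under standardization, so $\abs{R_n(k,h,d)}$ equals the right-hand side of~\cref{e21} and $\abs{U_n(k,h,d)}$ equals the left-hand side.

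To obtain~\cref{RelPkDesmh}, multiply~\cref{e21} by $y^kz^ht^dx^n/n!$ and sum over all admissible $(n,k,h,d)$, as in the proof of~\cref{RelPkDes}. The first term on the left sums to $P^{(\pk,\,\depth,\,\des)}(x,y,z,t)$ (restoring the missing $(0,0,1,1)$ term is harmless, as $p_0^{(\pk,\,\depth,\,\des)}(0,1,1)=0$); the second, after the shift $(h,d)\mapsto(h+1,d+1)$, sums to $zt\brk1{P^{(\pk,\,\depth,\,\des)}(x,y,z,t)-1}$; and on the right the sum factors, the $\tau$-part summing to $B^{(\pk,\,\des)}(x,y,t)$ and the $\rho$-part to $B^{(\pk,\,\des)}\brk1{xzt,\,y,\,1/(z^2t)}$, because a ballot permutation of length $l=2i+h$ with $j$ peaks and $i$ descents contributes $b_l^{(\pk,\,\des)}(j,i)\,y^jz^{l-2i}t^{l-i}x^l/l!=b_l^{(\pk,\,\des)}(j,i)\,y^j\brk1{1/(z^2t)}^i(xzt)^l/l!$. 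Equating the two sides yields~\cref{RelPkDesmh}.

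The main obstacle, exactly as in~\cref{recbdespk}, is the surjectivity verification: one must confirm that cutting $\pi$ at $\min L(\pi)-1$ (respectively at $\max L(\pi)$) really produces a member of $R_n(k,h,d)$, i.e., that the reversed prefix is a ballot permutation of the correct height $h-1$ and the suffix is a ballot permutation, with the peaks and descents splitting as stated. This needs careful bookkeeping of the single step at position $\abs{\rho}$ (forced to be a descent in the first case and an ascent in the second), of the two adjacent positions where a peak could appear or disappear, and of the boundary case $\tau=\epsilon$; the off-by-one feature that the index $h$ in~\cref{e21} equals $h(\rho)+1$ rather than $h(\rho)$ is also easy to get wrong.
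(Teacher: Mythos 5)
Your proposal is correct and follows essentially the same route as the paper, which proves \cref{e21} by showing that $\phi$ restricted to the set of admissible pairs $(\rho,\tau)$ (your $R_n(k,h,d)$, the paper's $Q(n,k,d,h)$) is a bijection onto $\P_n^{(\pk,\,\depth,\,\des)}(k,h,d)\cup \P_n^{(\pk,\,\depth,\,\des)}(k,\,h-1,\,d-1)$ ``similar to the proof of \cref{recbdespk}'' and then translates to \cref{RelPkDesmh} by the same generating-function manipulation as in \cref{RelPkDes}. Your write-up supplies the details the paper leaves implicit---in particular the observation that $\std(\rho)\in\B_{2i+h}^{(\pk,\,\des)}(j,i)$ encodes height $h-1$, and the verification of the exponent bookkeeping $y^jz^{l-2i}t^{l-i}x^l/l!=y^j(1/(z^2t))^i(xzt)^l/l!$---and these details are accurate.
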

\begin{proof}
Consider the set
\begin{multline*}
Q(n,k,d,h)=\{(\rho,\tau)\colon \rho\tau\in \mathcal S_{n},\ \exists\ i,j\geq0 \text{ such that }\\
\std(\rho)\in \B_{2i+h}^{(\pk,\,\des)}(j,i)\text{ and }\std(\tau)\in \B_{n-2i-h}^{(\pk,\,\des)}(k-j,\,d-i-h)\}.
\end{multline*}
Similar to the proof of \cref{recbdespk}, it can be proved that the map $\phi|_{Q(n,k,d,h)}$
is a bijection from $Q(n,k,d,h)$ to the union
\[
\P_n^{(\pk,\,\depth,\,\des)}(k,h,d)\cup \P_n^{(\pk,\,\depth,\,\des)}(k,\,h-1,\,d-1),
\]
which implies \cref{e21}.
The desired generating function can be obtained by using standard techniques
in generatingfunctionology as that is used in the proof of \cref{RelPkDes}.
\end{proof}

\begin{remark}
\cref{RelPkDesmh} reduces to \cref{RelPkDes} by specifying $z=1$.
\end{remark}

\subsection{The bivariate generating functions for statistics pk, des
over $\B_n$}\label{sec:gf:pk:des}
First, we can deduce the
bivariate generating functions for the statistic pk over $\B_n$.

\begin{thm}\label{gf:bpk}
\[
B^{\pk}(x,y)
=\sqrt{\frac{\sqrt{1-y}\cosh(x\sqrt{1-y})+\sinh(x\sqrt{1-y})}
{\sqrt{1-y}\cosh(x\sqrt{1-y})-\sinh(x\sqrt{1-y})}}.
\]
\end{thm}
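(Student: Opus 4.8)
The plan is to specialize the formula for $B^{(\pk,\,\des)}(x,y,t)$ from \cref{formdespk} to $t=1$, since $B^{\pk}(x,y)=B^{(\pk,\,\des)}(x,y,1)$. The subtle point is that the operator $\D^{t,x}$ appears in the statement of \cref{formdespk}, and one cannot naively set $t=1$ before applying it; instead I would track where the powers of $t$ come from. Recall that $\ln B^{(\pk,\,\des)}(x,y,t)$ collects exactly the terms $x^ny^kt^d$ with $d\le(n-1)/2$, while $\ln B^{(\pk,\,\des)}(xt,y,1/t)$ collects exactly those with $d>(n-1)/2$; so $\D^{t,x}$ simply picks out the ``first half'' of the $t$-expansion of $\ln\!\bigl(1+\tfrac{(1+t)(1+u)v(w-1)}{yt(1+uv)(1-vw)}\bigr)$. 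After applying $\D^{t,x}$ and then setting $t=1$, one recovers $\ln B^{(\pk,\,\des)}(x,y,1)$ directly — equivalently, $B^{\pk}(x,y)^2 = B^{(\pk,\,\des)}(x,y,1)\cdot B^{(\pk,\,\des)}(x,y,1)$ is obtained by reading \cref{RelPkDes} at $t=1$, which gives
\[
B^{(\pk,\,\des)}(x,y,1)^2 = 2P^{(\pk,\,\des)}(x,y,1)-1.
\]
This is the cleanest route: evaluate the right-hand side of \cref{RelPkDes} at $t=1$ rather than manipulate the operator formula.

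So the first step is to compute $P^{(\pk,\,\des)}(x,y,1)$. From the displayed computation inside the proof of \cref{formdespk},
\[
P^{(\pk,\,\des)}(x,y,t)=1+\frac{(1+u)v(w-1)}{yt(1+uv)(1-vw)},
\qquad
w=\exp\!\brk3{\frac{x(1+u)(1-v)}{1+uv}},
\]
with $u,v$ as in \cref{def:uv}. Setting $t=1$, the radical $\sqrt{(1+t)^2-4yt}$ becomes $\sqrt{4-4y}=2\sqrt{1-y}$, and one finds after simplification that $u=\dfrac{1-\sqrt{1-y}}{1+\sqrt{1-y}}$ equals $v$; write $s=\sqrt{1-y}$, so $u=v=\dfrac{1-s}{1+s}$. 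Then $\dfrac{1+u}{1+uv}$, $\dfrac{1-v}{1+uv}$, and $\dfrac{1+u}{1+uv}$ all simplify to rational functions of $s$; in particular $\dfrac{x(1+u)(1-v)}{1+uv}$ collapses to $xs\cdot(\text{something simple})$, so that $w=e^{xs}$ up to a constant factor, or more precisely the exponent becomes a multiple of $x\sqrt{1-y}$. Substituting these into the formula for $P^{(\pk,\,\des)}(x,y,1)$ gives a closed expression in $s=\sqrt{1-y}$ and $w=e^{xs}$ (or $e^{x\sqrt{1-y}}$).

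The final step is purely algebraic: take $2P^{(\pk,\,\des)}(x,y,1)-1$, express it in terms of hyperbolic functions via $\cosh(x\sqrt{1-y})=\tfrac12(w+w^{-1})$ and $\sinh(x\sqrt{1-y})=\tfrac12(w-w^{-1})$ after the appropriate normalization of $w$, and recognize the result as
\[
\frac{\sqrt{1-y}\cosh(x\sqrt{1-y})+\sinh(x\sqrt{1-y})}
{\sqrt{1-y}\cosh(x\sqrt{1-y})-\sinh(x\sqrt{1-y})};
\]
then $B^{\pk}(x,y)$ is its square root by the displayed identity above. As a sanity check one can verify the constant term (it should be $1$, since $B^{\pk}(0,y)=1$) and the coefficient of $x$ (which counts $\pi\in\B_1$, giving $B^{\pk}(x,y)=1+x+\cdots$), and these should match the hyperbolic expression's Taylor expansion at $x=0$.

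The main obstacle I anticipate is the bookkeeping needed to justify that setting $t=1$ commutes appropriately — i.e., that evaluating \cref{RelPkDes} at $t=1$ is legitimate and that $B^{(\pk,\,\des)}(xt,y,1/t)\big|_{t=1}=B^{(\pk,\,\des)}(x,y,1)$, which requires knowing the left-hand product is a genuine power series in $x$ with polynomial coefficients in $y$ and $t^{\pm1}$ (true because $\des\le(n-1)/2$ bounds the $t$-degrees). The algebraic simplification from the $\{u,v,w\}$-parametrization down to $\{s,w\}$ is routine but error-prone; the key simplifying observation that $u=v$ when $t=1$ is what makes everything collapse, and I would present that identification explicitly before grinding through the substitution.
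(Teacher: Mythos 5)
Your overall strategy coincides with the paper's: set $t=1$ in \cref{RelPkDes} to obtain $[B^{\pk}(x,y)]^2=2P^{\pk}(x,y)-1$, and then substitute a closed form for $P^{\pk}(x,y)$. The paper simply quotes the known formula
\[
P^{\pk}(x,y)=\frac{\sqrt{1-y}\,\cosh(x\sqrt{1-y})}{\sqrt{1-y}\,\cosh(x\sqrt{1-y})-\sinh(x\sqrt{1-y})}
\]
from Entringer/Kitaev/Zhuang, whereas you propose to re-derive it from the $(u,v,w)$-parametrization appearing in the proof of \cref{formdespk}. That is a legitimate alternative route, and your remarks about why evaluating \cref{RelPkDes} at $t=1$ is permissible are correct.

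However, your ``key simplifying observation'' is false: at $t=1$ one does \emph{not} have $u=v$. In \cref{def:uv} the factor $(1-t)$ kills the radical in the numerator of $u$, so $u|_{t=1}=\frac{2-2y}{2(1-y)}=1$, while $v|_{t=1}=\frac{1-s}{1+s}$ with $s=\sqrt{1-y}$. This matters for the rest of your computation: with the correct values one finds $\frac{(1+u)(1-v)}{1+uv}=2s$, hence $w=e^{2xs}$ and
\[
P^{\pk}(x,y)=1+\frac{w-1}{(1+s)-(1-s)w}=\frac{s\cosh(xs)}{s\cosh(xs)-\sinh(xs)},
\]
which feeds into $2P^{\pk}-1$ to give the stated result. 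By contrast, with your claimed $u=v=\frac{1-s}{1+s}$ one gets $\frac{(1+u)(1-v)}{1+uv}=\frac{2s}{1+s^2}$, so $w=e^{2xs/(1+s^2)}$, and the hyperbolic functions of $x\sqrt{1-y}$ never materialize; the computation would not ``collapse'' to the stated answer. The error is local and fixable --- replace $u=v$ by $u=1$, $v=\frac{1-s}{1+s}$ and the rest of your outline goes through --- but as written the pivotal step fails.
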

\begin{proof}
We set $t=1$ in \cref{RelPkDes}.
Since
\[
B^{(\pk,\,\des)}(x,y,1)=B^{\pk}(x,y)
\quad\text{and}\quad
P^{(\pk,\,\des)}(x,y,1)=P^{\pk}(x,y),
\]
we find
\begin{equation}\label{e15}
[B^{\pk}(x,y)]^2=2P^{\pk}(x,y)-1.
\end{equation}
It is known that
\[
P^{\pk}(x,y)=\frac{\sqrt{1-y}\cosh(x\sqrt{1-y})}
{\sqrt{1-y}\cosh(x\sqrt{1-y})-\sinh(x\sqrt{1-y})},
\]
see Entringer~\cite{Ent69}, Kitaev~\cite{Kit07} and Zhuang~\cite{Zhuang16}
for instance.
Substituting the above equation into \cref{e15}, we derive the desired equation.
\end{proof}

The first few terms of $B^{\pk}(x,y)$ are as follows.
\begin{align*}
B^{\pk}(x,y)&=1+x+\frac{x^2}{2}+\frac{x^3}{3!}(1+2t)
+\frac{x^4}{4!}(1+8t)+\frac{x^5}{5!}\brk1{1+28t+16t^2}+\dotsb.
\end{align*}
The coefficients triangle of the above polynomial is not found in the OEIS~\cite{OEIS}.

Second, we can deduce
the bivariate generating function for the statistic des over $\B_n$.

\begin{thm}\label{fomofbxt}
\[
B^{\des}(x,t)
=\exp\brk3{x+2 \sum_{k\ge 1}\sum_{d\le k-1}
E(2k,d)t^{d+1}\frac{x^{2k+1}}{(2k+1)!}}.
\]
\end{thm}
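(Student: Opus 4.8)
The plan is to obtain \cref{fomofbxt} by setting $y=1$ in \cref{RelPkDes} (equivalently, in \cref{formdespk}), then isolating the ``ballot half'' of the resulting two-variable series exactly as in the proof of \cref{formdespk}; the stated closed form will then drop out of the classical power-sum identity for Eulerian numbers.

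First I would put $y=1$ in \cref{RelPkDes}. Since $B^{(\pk,\,\des)}(x,1,t)=B^{\des}(x,t)$ and $P^{(\pk,\,\des)}(x,1,t)=P^{\des}(x,t)=\sum_{n}A_n(t)x^n/n!=(t-1)/(t-e^{(t-1)x})$ by \cref{gf:A}, a short simplification yields
\[
B^{\des}\Bigl(xt,\tfrac1t\Bigr)\,B^{\des}(x,t)=(1+t)P^{\des}(x,t)-t=\frac{e^{(1-t)x}-t}{1-te^{(1-t)x}}.
\]
(The same identity also comes out of \cref{formdespk}: as $y\to1$ one has $(1+t)^2-4t=(1-t)^2$, hence $u\to0$, $v\to t$, $w\to e^{(1-t)x}$, so the argument of the logarithm there collapses to the right-hand side above.) Taking logarithms, and using that every $\pi\in\B_n$ has $\des(\pi)\le(n-1)/2$, the series $\ln B^{\des}(x,t)$ involves only monomials $x^nt^d$ with $d\le(n-1)/2$, whereas $\ln B^{\des}(xt,1/t)=\ln\bigl(1+\sum_{n\ge1}\sum_{\pi\in\B_n}t^{\,n-\des(\pi)}x^n/n!\bigr)$ involves only monomials with $d>(n-1)/2$. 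Hence $\D^{t,x}$ annihilates the second logarithm and fixes the first, so
\[
\ln B^{\des}(x,t)=\D^{t,x}\ln\frac{e^{(1-t)x}-t}{1-te^{(1-t)x}}.
\]

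It remains to expand this logarithm. Writing the argument as $e^{(1-t)x}\cdot\dfrac{1-te^{-(1-t)x}}{1-te^{(1-t)x}}$ and using $-\ln(1-z)=\sum_{m\ge1}z^m/m$, the even powers of $x$ cancel (the two exponentials differ only by the sign of the exponent), leaving
\[
\ln\frac{e^{(1-t)x}-t}{1-te^{(1-t)x}}=(1-t)x+2\sum_{\ell\ge0}\frac{(1-t)^{2\ell+1}x^{2\ell+1}}{(2\ell+1)!}\sum_{m\ge1}m^{2\ell}t^{m}.
\]
Now invoke the classical expansion $\sum_{m\ge1}m^{2\ell}t^{m}=(1-t)^{-(2\ell+1)}\sum_{d}E(2\ell,d)t^{d+1}$: the factors $(1-t)^{2\ell+1}$ cancel, and splitting off the $\ell=0$ term (with $E(0,0)=1$) and collapsing $(1-t)x+2tx=x+tx$, the expression becomes
\[
x+tx+2\sum_{\ell\ge1}\frac{x^{2\ell+1}}{(2\ell+1)!}\sum_{d}E(2\ell,d)t^{d+1}.
\]
Finally $\D^{t,x}$ keeps the term $x$, discards $tx$ (for $n=1$ one would need $d\le0$), and from each $\ell\ge1$ block retains precisely the monomials with $d+1\le\ell$, i.e.\ $d\le\ell-1$; writing $k$ for $\ell$ and exponentiating gives the asserted formula for $B^{\des}(x,t)$.

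I expect the main obstacle to be the logarithmic expansion: one must notice that all even powers of $x$ drop out (equivalently, that $B^{\des}(x,t)\,B^{\des}(-x,t)=1$, which at $t=1$ is the identity $\sqrt{(1+x)/(1-x)}\cdot\sqrt{(1-x)/(1+x)}=1$ consistent with \cref{gf:b}) and must apply the Eulerian power-sum identity in exactly the form that cancels the spurious factor $(1-t)^{2\ell+1}$. The $y=1$ specialization and the degree bookkeeping for $\D^{t,x}$ are routine and essentially already appear in the proof of \cref{formdespk}.
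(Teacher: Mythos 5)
Your proposal is correct and takes essentially the same approach as the paper: set $y=1$ in \cref{RelPkDes}, use the bound $\des(\pi)\le(n-1)/2$ for ballot permutations to extract $\ln B^{\des}(x,t)$ via the operator $\D^{t,x}$, and then identify the Eulerian-number expansion of $\ln\bigl(1+(1+t)E(x,t)\bigr)$. The only (immaterial) difference is in that last computation: you expand the logarithm directly and invoke the power-sum identity $\sum_{m\ge1}m^{2\ell}t^{m}=(1-t)^{-(2\ell+1)}\sum_{d}E(2\ell,d)t^{d+1}$, whereas the paper symmetrizes $E(x,t)+E(-x,t)$ and integrates to reach the same intermediate identity.
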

\begin{proof}
Taking $y=1$ in \cref{RelPkDes}, since
\begin{align*}
B^{(\pk,\,\des)}\brk2{xt,\,1,\,\frac{1}{t}}
&=B^{\des}\brk2{xt,\,\frac{1}{t}},\\
B^{(\pk,\,\des)}(x,1,t)
&=B^{\des}(x,t),\quad\text{and}\quad\\
P^{(\pk,\,\des)}(x,1,t)
&=P^{\des}(x,t)=E(x,t)+1,
\end{align*}
we have
\[
B^{\des}\brk2{xt,\,\frac{1}{t}}B^{\des}(x,t)
=(1+t)\brk1{E(x,t)+1}-t
=1+(1+t)E(x,t).
\]
Similar to \cref{formdespk}, we have
\[
B^{\des}(x,t)=\exp\brk2{D^{t,x}\ln\brk1{1+(1+t)E(x,t)}}.
\]
By \cref{Eulequ},
\[
2\sum_{k\ge 1}\sum_{d}\frac{E(2k,d)t^{d}x^{2k}}{(2k)!}
=E(x,t)+E(t,\,-x)
=\frac{e^{(1-t)x}-1}{1-t e^{(1-t)x}}
+\frac{e^{(1-t)(-x)}-1}{1-t e^{(1-t)(-x)}}.
\]
Therefore,
\begin{align*}
2\sum_{k\ge 1}\sum_{d}\frac{E(2k,d)t^{d+1} x^{2k+1}}{(2k+1)!}
&=2t\int_{0}^{x} \sum_{k\ge 1}\sum_{d}
\frac{E(2k,d)t^{d}u^{2k}}{(2k)!}\, du\\
&=t\int_{0}^{x}
\brk3{\frac{e^{(1-t)u}-1}{1-t e^{(1-t)u}}
+\frac{e^{(1-t)(-u)}-1}{1-t e^{(1-t)(-u)}}}\,du\\
\label{e3}
& = \ln \frac{1-te^{x(t-1)}}{1-te^{(1-t)x}}-2xt.
\end{align*}
It is not difficult to check that
\[
\ln(1+(1+t)E(x,t))
=x-xt+\ln \frac{1-te^{x(t-1)}}{1-te^{(1-t)x}}.
\]
Thus
\begin{align*}
D^{t,x}\brk1{\ln(1+(1+t)E(x,t))}
&=D^{t,x}\brk3{x+xt+2\sum_{k\ge 1}\sum_{d}\frac{E(2k,d)t^{d+1} x^{2k+1}}{(2k+1)!}}\\
&=x+2 \sum_{k\ge 1}\sum_{d\le k-1}E(2k,d)t^{d+1}\frac{x^{2k+1}}{(2k+1)!},
\end{align*}
which completes the proof.
\end{proof}

Expanding the power series $B(x,t)$ in $x$, we obtain
\begin{multline*}
B^{\des}(x,t)
=1+x+\frac{x^2}{2}+\frac{x^3}{3!}(1+2t)
+\frac{x^4}{4!}(1+8t)
+\frac{x^5}{5!}\brk1{1+22t+22t^2}\\
+\frac{x^6}{6!}\brk1{1+52t+172t^2}
+\frac{x^7}{7!}\brk1{1+114t+856t^2+604t^3}
%+\frac{x^8}{8!}\brk1{1+240t+3488t^2+7296t^3}
%+\frac{x^9}{9!}(1+494t+12746t^2+54746t^3+31238t^4)
+\dotsb.
\end{multline*}
The coefficients triangle of the above polynomial is~\cite[A321280]{OEIS}.
Now, we can establish the bivariate generating function for the statistic dp over $\mathcal S_n$.

\begin{cor}\label{gf:dep}
\[
P^{\depth}(x,z)
=\frac{z}{1+z}+\frac{\sqrt{1-x^2}}{(1-x)(1+z)}
\exp\brk3{xz+2\sum_{k\ge 1}\sum_{d\le k-1}E(2k,\,k-1-d)z^{2d+1}\frac{x^{2k+1}}{(2k+1)!}}.
\]
\end{cor}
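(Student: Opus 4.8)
The plan is to extract the depth generating function $P^{\depth}(x,z)$ from the joint statistic $(\pk,\depth,\des)$ by specializing variables in \cref{RelPkDesmh}, exactly in parallel with how \cref{fomofbxt} was deduced from \cref{RelPkDes}. First I would set $y=1$ in \cref{RelPkDesmh}. Since $B^{(\pk,\,\des)}(\cdot,1,\cdot)=B^{\des}(\cdot,\cdot)$ and $P^{(\pk,\,\depth,\,\des)}(x,1,z,t)=P^{(\depth,\,\des)}(x,z,t)$, this gives
\[
B^{\des}\brk2{xzt,\,\frac{1}{z^2 t}}B^{\des}(x,t)
=(1+zt)P^{(\depth,\,\des)}(x,z,t)-zt.
\]
Then I would set $t=1$ to collapse $\des$ and keep only $\depth$: the left side becomes $B^{\des}(xz,\,z^{-2})B^{\des}(x,1)$, and since $B^{\des}(x,1)=B(x)=\sqrt{(1+x)/(1-x)}$ by \cref{gf:b}, while $P^{(\depth,\,\des)}(x,z,1)=P^{\depth}(x,z)$, I obtain
\[
B^{\des}\brk2{xz,\,\frac{1}{z^2}}\sqrt{\frac{1+x}{1-x}}
=(1+z)P^{\depth}(x,z)-z,
\]
which already isolates $P^{\depth}(x,z)=\dfrac{z}{1+z}+\dfrac{1}{1+z}\sqrt{\dfrac{1+x}{1-x}}\,B^{\des}\brk1{xz,\,z^{-2}}$.

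The remaining work is to rewrite $B^{\des}(xz,\,z^{-2})$ using the explicit formula of \cref{fomofbxt}. Plugging $x\mapsto xz$ and $t\mapsto z^{-2}$ into
\[
B^{\des}(x,t)=\exp\brk3{x+2\sum_{k\ge1}\sum_{d\le k-1}E(2k,d)t^{d+1}\frac{x^{2k+1}}{(2k+1)!}}
\]
yields the exponent $xz+2\sum_{k\ge1}\sum_{d\le k-1}E(2k,d)z^{-2(d+1)}(xz)^{2k+1}/(2k+1)!$. The power of $z$ in the $(k,d)$-term is $2k+1-2(d+1)=2(k-1-d)+1$; reindexing the inner sum by $d'=k-1-d$ (so $d$ ranges over $0\le d\le k-1$ iff $d'$ does, and $E(2k,d)=E(2k,k-1-d')$) converts this into $xz+2\sum_{k\ge1}\sum_{d'\le k-1}E(2k,\,k-1-d')z^{2d'+1}x^{2k+1}/(2k+1)!$, which is exactly the exponent appearing in the statement. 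Combining with the prefactor $\frac{1}{1+z}\sqrt{(1+x)/(1-x)}=\frac{\sqrt{1-x^2}}{(1-x)(1+z)}$ gives the claimed closed form.

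None of this is deep; the only genuinely delicate point is the bookkeeping in the reindexing step—one must check that the substitution $t\mapsto z^{-2}$ produces precisely the "palindromic" Eulerian coefficient $E(2k,k-1-d)$ rather than, say, $E(2k,k-d)$ off by one, and that the range $d\le k-1$ is preserved. This hinges on the constraint $d\le k-1$ already built into \cref{fomofbxt} (equivalently $\des\le(n-1)/2$ for the relevant permutations), together with the symmetry $E(n,d)=E(n,n-1-d)$ of the Eulerian numbers applied with $n=2k$. I would verify the identification by matching low-order terms ($k=1,2$) against the expansion of $B^{\des}(x,t)$ given after \cref{fomofbxt} as a sanity check. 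The passage from $t=1$ specialization being legitimate as a formal power series identity is immediate since $B^{\des}(xz,z^{-2})$, despite the negative powers of $z$ it formally involves, in fact only contributes nonnegative powers of $z$ after the reindexing—precisely because of the $d\le k-1$ restriction—so the substitution $t\mapsto z^{-2}$ and then the implicit $t=1$ step are both well defined.
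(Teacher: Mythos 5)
Your proposal is correct and follows essentially the same route as the paper: specialize \cref{RelPkDesmh} at $y=t=1$, use \cref{gf:b} for $B^{\des}(x,1)=\sqrt{(1+x)/(1-x)}$, and substitute $x\mapsto xz$, $t\mapsto 1/z^2$ into \cref{fomofbxt} with the reindexing $d\mapsto k-1-d$. (One small remark: that reindexing is a pure change of summation variable, so the Eulerian symmetry $E(n,d)=E(n,n-1-d)$ you invoke at the end is not actually needed.)
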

\begin{proof}
Taking $y=t=1$ in \cref{RelPkDesmh}, we obtain
\begin{equation}\label{pf:dep}
B^{\des}\brk2{xz,\frac{1}{z^2}}\brk3{\sum_{n\geq 0}b_n \frac{x^n}{n!}}
=(1+z)P^{\depth}(x,z)-z,
\end{equation}
where $b_n$ is the number of ballot permutations of length $n$.
Substituting $x$ by $xz$ in \cref{fomofbxt},
and then replacing $t$ by $1/z^2$, we obtain
\begin{align}
B^{\des}\brk2{xz,\,\frac{1}{z^2}}
&=\exp\brk3{xz+2 \sum_{k\ge 1}\sum_{d\le k-1} E(2k,d)z^{2k-2d-1}\frac{x^{2k+1}}{(2k+1)!}}\notag\\
&=\exp\brk3{xz+2\sum_{k\ge 1}\sum_{d\le k-1}E(2k,k-1-d)z^{2d+1}\frac{x^{2k+1}}{(2k+1)!}}. \label{pf:B:xz:1/z}
\end{align}
Substituting \cref{pf:B:xz:1/z,gf:b} into \cref{pf:dep},
one may solve $P^{\depth}(x,z)$ out as desired.
\end{proof}

The first few terms of $P^{\depth}(x,z)$ are as follows.
\begin{align*}
P^{\depth}(x,z)&=1+x+\frac{x^2}{2}+\frac{x^3}{3!}\brk1{3+2z+z^2}
+\frac{x^4}{4!}\brk1{9+11z+3z^2+z^3}+\dotsm.
\end{align*}
The coefficient triangle of the above polynomial is not found in the OEIS~\cite{OEIS}.

\section{A proof for \cref{thm:Spiro}}\label{sec:proof:SpiroConj}
For convenience, define $\O_0(0)=\{\epsilon\}$.
For any integer pair $(n,d)\ne(0,0)$, define
\begin{align*}
\O_n(d)=\{\pi \in \O_n\colon M(\pi)=d\}.
%\quad\text{and}\quad
%p_n(d)=\abs{\O_n(d)}.
\end{align*}
Spiro's \cite[Proposition 3.2]{Spi20} can be restated as \cref{recofpnd}.

\begin{prop}[Spiro]\label{recofpnd}
For any integer $n\ge 0$ and any integer $d$,
\begin{equation}\label{e1}
\abs{\O_{n+1}(d)}
= \abs{\O_{n}(d)} + \sum_{i}\sum_{k\ge i}
2 {n \choose 2k} E(2k,\,i-1)\,|\O_{n-2k}(d-i)|.
\end{equation}
\end{prop}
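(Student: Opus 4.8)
The plan is to prove \cref{e1} by classifying each $\sigma\in\O_{n+1}(d)$ according to the cycle $c$ of $\sigma$ that contains the largest letter $n+1$; since every cycle of $\sigma$ has odd length, $c$ has length $2k+1$ for some $k\ge0$. If $k=0$, then $n+1$ is a fixed point of $\sigma$, and erasing it produces a permutation of $[n]$ that still lies in $\O_n$ and has the same value of $M$, because erasing a fixed point creates no new cyclic ascent or descent. This sets up a bijection onto $\O_n(d)$ and accounts for the term $\abs{\O_n(d)}$ on the right-hand side.

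Now suppose $k\ge1$. The first real step is to compute the contribution of $c$ to $M(\sigma)$. Writing $c=(n+1,\ a_1,a_2,\dots,a_{2k})$ with $n+1$ in front, maximality of $n+1$ forces a cyclic descent at the step $n+1\to a_1$ and a cyclic ascent at the step $a_{2k}\to n+1$, so $\cdes(c)=1+\des(a_1a_2\dotsm a_{2k})$ and $\casc(c)=(2k+1)-\cdes(c)$. Hence $\min(\cdes(c),\casc(c))=i$ for some $i$ with $1\le i\le k$, and this happens exactly when $\des(a_1a_2\dotsm a_{2k})$ equals $i-1$ or $2k-i$. The number of orderings of a fixed $2k$-element set with a prescribed number of descents is an Eulerian number (pass to the standardization), and the two values $i-1$ and $2k-i$ are distinct integers since $2i=2k+1$ has no integer solution; combined with the symmetry $E(2k,2k-i)=E(2k,i-1)$ of the Eulerian numbers, this shows that exactly $2E(2k,i-1)$ cyclic orderings of the $(2k+1)$-element set containing $n+1$ yield a cycle $c$ with $\min(\cdes(c),\casc(c))=i$.

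It then remains to assemble the bijection. Given $\sigma\in\O_{n+1}(d)$ whose cycle through $n+1$ has length $2k+1$ with $k\ge1$ and contributes $i$ to $M$, deleting that cycle leaves a permutation of the remaining $n-2k$ letters whose standardization lies in $\O_{n-2k}(d-i)$, because $M$ is additive over cycles and $\cdes,\casc$ depend only on relative order within a cycle. Conversely, $\sigma$ is reconstructed by choosing the $2k$ companions of $n+1$ in $\binom{n}{2k}$ ways, choosing one of the $2E(2k,i-1)$ admissible cyclic orders of these together with $n+1$, and choosing the permutation of the remaining letters from $\O_{n-2k}(d-i)$ after the obvious order-preserving relabelling. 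Summing the count $2\binom{n}{2k}E(2k,i-1)\abs{\O_{n-2k}(d-i)}$ over $i\ge1$ and $k\ge i$ and adding the fixed-point term gives \cref{e1}, once one observes that the conventions fixed in \cref{sec:preliminary} — namely $E(2k,i-1)=0$ unless $1\le i\le 2k$, $\binom{n}{2k}=0$ for $k<0$, and $\O_m=\emptyset$ for $m<0$ — make every summand outside the range $1\le i\le k$, $2k\le n$ vanish, so that the sum may be written as $\sum_i\sum_{k\ge i}$.

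The delicate point is the Eulerian bookkeeping in the middle step: one must verify that the two sub-cases $\des(a_1\dotsm a_{2k})=i-1$ and $\des(a_1\dotsm a_{2k})=2k-i$ are disjoint, that each contributes exactly $E(2k,i-1)$ orderings, and that both give precisely $\min(\cdes(c),\casc(c))=i$, so that no arrangement is omitted or double-counted, in particular at the boundary $i=k$. The remaining ingredients are the additivity of $M$ over cycles and the routine check that the $k=0$ term and the degenerate instances (such as $n=0$, or $d$ outside the feasible range) are already absorbed by the conventions.
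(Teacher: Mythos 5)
Your argument is correct. Note that the paper itself gives no proof of this proposition---it is quoted directly from Spiro's paper (his Proposition~3.2)---so there is nothing internal to compare against; what you have written is a complete, self-contained reconstruction of the natural argument: remove the cycle through $n+1$, observe that writing that cycle as $(n+1,a_1,\dots,a_{2k})$ gives $\cdes(c)=1+\des(a_1\dotsm a_{2k})$ and hence $\min(\cdes(c),\casc(c))=i$ exactly when $\des(a_1\dotsm a_{2k})\in\{i-1,\,2k-i\}$, and use the Eulerian symmetry $E(2k,2k-i)=E(2k,i-1)$ together with the oddness of $2k+1$ (so the two cases are disjoint) to get the factor $2E(2k,i-1)$. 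Your handling of the fixed-point case, the additivity of $M$ over cycles under standardization, and the boundary conventions all check out, so the proof stands as a valid substitute for the external citation.
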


Now we are in a position to prove \cref{thm:Spiro}.

\begin{proof}
In view of \cref{fomofbxt}, it is equivalent to show that
the generating function $O(x,t)=\sum_{n,d}\abs{\O_n(d)}t^{d}x^{n}/n!$ is
\begin{equation}\label{dsr:O}
O(x,t)=\exp\brk3{x+2 \sum_{k\ge 1}\sum_{d\le k-1}
E(2k,d)t^{d+1}\frac{x^{2k+1}}{(2k+1)!}}.
\end{equation}
In fact, multiplying each term in \cref{e1} by $t^{d}x^{n}/n!$ and
summing over all integers $n \geq 1$ and all integers $d$,
we deduce the following respectively:
\begin{align*}
\sum_{n\ge 1}\sum_{d}\abs{\O_{n+1}(d)} t^d\,\frac{x^n}{n!}
&=\frac{\partial O(x,t)}{\partial x}-\abs{\O_1(0)}-\abs{\O_1(1)}t
=\frac{\partial O(x,t)}{\partial x}-1,\\
\sum_{n\ge 1}\sum_{d}\abs{\O_n(d)}t^{d}\,\frac{x^{n}}{n!}
&=O(x,t)-1,
\end{align*}
and
\begin{align*}
& \sum_{n\ge 1}\sum_{d,\,i}\sum_{k\ge i}
2E(2k,\,i-1)\abs{\O_{n-2k}(d-i)}t^{d}\,\frac{x^{n}}{(2k)!(n-2k)!}\\
=\ &2t\,\sum_{i}\sum_{k\ge i}E(2k,\,i-1)t^{i-1}\frac{x^{2k}}{(2k)!}
\sum_{n,\,d}\abs{\O_{n-2k}(d-i)}t^{d-i}\frac{x^{n-2k}}{(n-2k)!}\\
=\ &2t\,\sum_{k\ge 1}\sum_{i\le k}E(2k,\,i-1)t^{i-1} \frac{x^{2k}}{(2k)!}
\sum_{n,\,d}\abs{\O_n(d)}t^{d}\,\frac{x^{n}}{n!}\\
=\ &2t\,O(x,t)\sum_{k\ge 1}\sum_{d\le k-1}E(2k,d)t^{d}\, \frac{x^{2k}}{(2k)!}.
\end{align*}
Combining them together, we obtain
\[
\frac{\partial O(x,t)}{\partial x} = O(x,t)\brk4{1 + 2t
\sum_{k\ge 1}\sum_{d\le k-1}E(2k,d)t^{d}\, \frac{x^{2k}}{(2k)!}}.
\]
Solving this differential equation out, we obtain \cref{dsr:O}.
\end{proof}

As a corollary, we have
\begin{cor}
For $n \geq 1$ and $0 \leq d \leq \lfloor \frac{n -1}{2}\rfloor$, we have
\[
b_n^{\des}(d)=\abs{\O_n(d)}
=\sum_{m=1}^{n}\sum_{i=0}^{m}\sum_{d_{1}+\dotsb+d_{i}=d-i,\atop
2k_{1}+\dotsb+2k_{i}=n-m}\frac{2^{i}}{m!} E(2k_{1},d_{1})\dotsm E(2k_{i},d_{i}).
\]
\end{cor}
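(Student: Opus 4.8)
The plan is to extract the coefficient of $x^n/n!$ on both sides of the closed form for $B^{\des}(x,t)=O(x,t)$ given in \cref{fomofbxt} and \cref{dsr:O}. Writing $F(x,t)=x+2\sum_{k\ge 1}\sum_{d\le k-1}E(2k,d)t^{d+1}x^{2k+1}/(2k+1)!$, we have $B^{\des}(x,t)=\exp(F(x,t))$, so the standard exponential formula gives
\[
B^{\des}(x,t)=\sum_{m\ge 0}\frac{F(x,t)^m}{m!}.
\]
First I would expand $F(x,t)^m$ as an $m$-fold product. Since $F$ has constant term $0$, only finitely many $m$ contribute to any fixed power $x^n$; in fact $m$ ranges over $1\le m\le n$ because every monomial of $F$ has $x$-degree at least $1$. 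Each of the $m$ factors of $F$ is either the linear term $x$ (contributing a factor $x$ of degree $1$) or one of the higher terms $2E(2k,d)t^{d+1}x^{2k+1}/(2k+1)!$ (contributing odd degree $2k+1\ge 3$).

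Next I would organize the expansion by how many of the $m$ factors are "higher" terms. Say $i$ of the $m$ factors are higher terms, with parameters $(k_1,d_1),\dots,(k_i,d_i)$, and the remaining $m-i$ factors are the linear term $x$. Such a choice can be made in $\binom{m}{i}$ ways, but since the factors are multiplied we must account for multinomial bookkeeping carefully: collecting the coefficient of $x^n/n!$ from $F(x,t)^m/m!$ produces the multinomial coefficient $\binom{n}{1,\dots,1,2k_1+1,\dots,2k_i+1}$ divided by $m!$, together with the product $\prod_{\ell=1}^i 2E(2k_\ell,d_\ell)/(2k_\ell+1)!$ and the factor $\binom{m}{i}$ counting which positions hold the linear terms. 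The $x$-degrees must sum to $n$: $(m-i)\cdot 1+\sum_\ell(2k_\ell+1)=n$, i.e.\ $m+\sum_\ell 2k_\ell=n$, equivalently $2k_1+\dots+2k_i=n-m$; and the $t$-degree is $d_1+1+\dots+d_i+1=d$, i.e.\ $d_1+\dots+d_i=d-i$. After the multinomial coefficient cancels the factorials $(2k_\ell+1)!$ and the single-$x$ factorials, and the $1/m!$ cancels against $\binom{m}{i}\cdot(m-i)!=m!/i!$ leaving $1/i!$ — wait, one must track this cancellation precisely — one is left with $\sum_{m=1}^n\sum_{i=0}^m\frac{1}{m!}\binom{m}{i}\cdot(\text{multinomial})\cdot\prod 2E(2k_\ell,d_\ell)$, which simplifies to the stated $\sum_{m=1}^n\sum_{i=0}^m\sum \frac{2^i}{m!}E(2k_1,d_1)\cdots E(2k_i,d_i)$ once one checks that the $i!$ from ordering the $i$ higher factors matches the implicit unordered-versus-ordered convention in the inner sum. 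Finally, the equality $b_n^{\des}(d)=\abs{\O_n(d)}$ is exactly \cref{thm:Spiro}, which has already been proved, so it can simply be invoked.

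The main obstacle I expect is the bookkeeping of multinomial and symmetry factors: making sure that the $1/m!$ from $\exp$, the $\binom{m}{i}$ from choosing which factors are linear, the $(2k_\ell+1)!$ denominators from $F$, and the multinomial coefficient $\binom{n}{\dots}$ from extracting $x^n/n!$ all combine to leave precisely $2^i/m!$ with the sum over $(k_1,d_1),\dots,(k_i,d_i)$ interpreted as written (ordered tuples, so no extra $i!$). A clean way to sidestep sign errors is to not pass through $\binom{m}{i}$ at all: instead write $F=x+G$ with $G=2\sum E(2k,d)t^{d+1}x^{2k+1}/(2k+1)!$, use $\exp(F)=\exp(x)\exp(G)$, expand $\exp(x)=\sum_{j\ge0}x^j/j!$ and $\exp(G)=\sum_{i\ge0}G^i/i!$, extract coefficients from each factor separately, and then convolve; here $m=i+j$ and the identity falls out directly with the $1/i!$ from $\exp(G)$ and the $1/j!$ from $\exp(x)$ reassembling into $1/m!$ via the Vandermonde-type step $\sum_{i+j=m}\binom{m}{i}=2^m$ — no, rather one keeps $i$ and $j$ separate and substitutes $j=m-i$ at the end. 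I would present the argument in this factored form to keep the combinatorial identities transparent.
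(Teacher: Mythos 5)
Your overall strategy---extracting the coefficient of $t^dx^n/n!$ from $B^{\des}(x,t)=\exp(F(x,t))$ of \cref{fomofbxt}, preferably in the factored form $\exp(x)\exp(G)$, and then invoking \cref{thm:Spiro} for the equality $b_n^{\des}(d)=\abs{\O_n(d)}$---is exactly the (unstated) derivation the paper intends, and your constraints $2k_1+\dotsb+2k_i=n-m$ and $d_1+\dotsb+d_i=d-i$ are right. The gap is at the single step you flagged and then waved through: the factorials do \emph{not} cancel down to $2^i/m!$. Carrying out your own $\exp(x)\exp(G)$ expansion honestly, with $j=m-i$ copies of the linear term contributing $1/j!$, the $i$ higher terms contributing $1/i!$ together with the denominators $(2k_\ell+1)!$, and an overall $n!$ from normalizing to an exponential coefficient, one obtains
\[
b_n^{\des}(d)=\sum_{m=1}^{n}\sum_{i=0}^{m}\sum_{\substack{d_1+\dotsb+d_i=d-i\\ 2k_1+\dotsb+2k_i=n-m}}\frac{2^i}{i!\,(m-i)!}\cdot\frac{n!}{(2k_1+1)!\dotsm(2k_i+1)!}\,E(2k_1,d_1)\dotsm E(2k_i,d_i).
\]
The factor $n!/\prod_\ell(2k_\ell+1)!$ depends on the individual $k_\ell$, not merely on $m$ and $i$, so it cannot be absorbed into $1/m!$; there is no "implicit ordered-versus-unordered convention" that rescues the cancellation.

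In fact the identity as printed is false, so no correct bookkeeping can reach it. Take $n=4$, $d=1$: the only possibly nonzero terms on the right have $i=1$, $d_1=0$, $2k_1=4-m$, namely $m=2$, $k_1=1$ giving $\tfrac{2}{2!}E(2,0)=1$ (plus, if one admits $k_1=0$ under the convention $E(0,0)=1$, an extra $\tfrac{2}{4!}$), whereas $b_4^{\des}(1)=\abs{\O_4(1)}=8$ (the eight $3$-cycles of $\mathcal S_4$, each with $M=1$), consistent with the coefficient $8t$ of $x^4/4!$ in the paper's own expansion of $B^{\des}(x,t)$. The corrected formula displayed above does give $\tfrac{2}{1!\,1!}\cdot\tfrac{4!}{3!}\,E(2,0)=8$. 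So you should either prove the corrected statement (your method does this once the factorials are tracked) or explicitly record that the corollary as stated does not follow; asserting that the cancellation "simplifies to the stated" expression is the step that fails. Everything else in your plan---the range $1\le m\le n$, the $t$-degree bookkeeping, and the appeal to \cref{thm:Spiro}---is fine.
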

Recall that Bidkhori and Sullivant~\cite{BS11} proved that
\[
b_{2n+1}^{\des}(n)
=\frac{E(2n+1,\,n)}{n+1},
\]
which leads to the following corollary.
\begin{cor}
For $n \geq 1$, we have
\[
\frac{E(2n+1,\,n)}{n+1}
=\sum_{m=0}^{n}\sum_{i=0}^{2m+1}
\sum_{d_{1}+\dotsb+d_{i}=n-i \atop k_{1}+\dotsb+k_{i}=n-m}
\frac{2^{i}}{(2m+1)!} E(2k_{1},d_{1})\dotsm E(2k_{i},d_{i}).
\]
\end{cor}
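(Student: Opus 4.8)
The plan is to obtain this identity by specializing the preceding corollary (the explicit sum formula for $b_n^{\des}(d)$) and then invoking the Bidkhori–Sullivant evaluation $b_{2n+1}^{\des}(n)=E(2n+1,n)/(n+1)$ quoted just above the statement.

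First I would apply the preceding corollary with the length parameter equal to $2n+1$ and the descent parameter equal to $n$; this specialization is admissible since $n\le\floor{((2n+1)-1)/2}=n$. It gives
\[
b_{2n+1}^{\des}(n)=\sum_{m=1}^{2n+1}\sum_{i=0}^{m}
\sum_{d_{1}+\dotsb+d_{i}=n-i,\atop 2k_{1}+\dotsb+2k_{i}=2n+1-m}
\frac{2^{i}}{m!}E(2k_{1},d_{1})\dotsm E(2k_{i},d_{i}).
\]
Next I would reindex the outer sum. The innermost sum is empty unless $2n+1-m$ is a nonnegative even integer, that is, unless $m$ is odd; writing $m=2m'+1$ with $0\le m'\le n$, the constraint $2k_{1}+\dotsb+2k_{i}=2n+1-m$ becomes $k_{1}+\dotsb+k_{i}=n-m'$, the factorial $m!$ becomes $(2m'+1)!$, and the index range $0\le i\le m$ becomes $0\le i\le 2m'+1$. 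Renaming $m'$ back to $m$ turns the right-hand side into exactly the triple sum in the statement, and recalling $b_{2n+1}^{\des}(n)=E(2n+1,n)/(n+1)$ on the left completes the argument.

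The computation is elementary throughout; essentially the only thing to watch is the bookkeeping of supports in the reindexing step — one must check that the empty-product case $i=0$ and the extended conventions on the Eulerian numbers ($E(0,0)=1$ and $E(2k,d)=0$ outside the admissible range) neither create nor destroy terms when passing from $m$ to $2m'+1$. Since for $n\ge 1$ the $i=0$ summand vanishes identically (it would force $n=0$) and the parity restriction on $m$ is dictated precisely by the constraint on the $k_j$, no such difficulty arises, so this step is a matter of care rather than of substance.
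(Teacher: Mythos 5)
Your proposal is correct and follows exactly the route the paper intends: specialize the preceding corollary to $b_{2n+1}^{\des}(n)$, observe that the parity constraint $2k_1+\dotsb+2k_i=2n+1-m$ forces $m$ odd so that writing $m=2m'+1$ yields the stated triple sum, and then invoke the Bidkhori--Sullivant evaluation $b_{2n+1}^{\des}(n)=E(2n+1,n)/(n+1)$. Your care with the $i=0$ term and the extended Eulerian-number conventions is exactly the right bookkeeping, and nothing further is needed.
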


%
%
%\section{Concluding remarks}
%
%The distributions
%of related statistics on the set of ballot permutations of length $2n+1$
%with $n$ descents is also interesting, since the corresponding ballot paths are Dyck paths.
%This is related to the problem mentioned in \cite{BS11}: Both the Catalan numbers and Eulerian numbers have $q$ (and $q$, $t$) analogues. Do these extend to the Eulerian-Catalan numbers?
%
%\DWr{
%For ballot permutations,
%what statistics are involved in this paper?
%Why not consider other statistics like maj and inv? We only consider the pk, des over $\B_n$ and dp over $\mathcal{S}_n$,
%because the $\B_n$ is a much more complex combinatorics structure than $\mathcal{S}_n$.
%The methods of studying the distribution of statistics of $\mathcal{S}_n$
%can not be directly applied to $\B_n$. The decomposition method of permutation applied in this paper
%seems to be only suitable for peak and statistics like peak defined in \cref{rem1}. }
%%Unfortunately, we can't give an explicit formula for
%%$P^{(\pk,\,\depth,\,\des)}(x,y,z,t)$, but only give a relation
%%between $B^{(\pk,\,\des)}(x,y,t)$ and $P^{(\pk,\,\depth,\,\des)}(x,y,z,t)$.
%

\end{document}